\numberwithin{equation}{section}
\numberwithin{figure}{section}
\newtheorem{thm}{Theorem}[section]
\newtheorem{prop}[thm]{Proposition}
\newtheorem{coro}[thm]{Corollary}
\theoremstyle{definition}
\newtheorem{defx}[thm]{Definition}
\newtheorem{rem}[thm]{Remark}
\newcommand{\R}{I\!\!R}
\title{Cohomogeneity One Groupoid Analysis of the Dynamical System of Rings of Continuous Functions}
\author{{\emph{N. O. Okeke \;  Email: nickokeke@dui.edu.ng, }}
\\ Physical and Mathematical Sciences, Dominican University, Ibadan \\
\emph{M. E. Egwe \;  murphy.egwe@ui.edu.ng}
\\ Department of Mathematics, University of Ibadan, Ibadan, Nigeria}
\date{\vspace{-5ex}}
\begin{document}
\maketitle

\begin{abstract}
Using the group $G(1)$ of invertible elements and the maximal ideals $\mathfrak{m}_x$ of the commutative algebra $C(X)$ of real-valued functions on a compact regular space $X$, we define a Borel action of the algebra on the measure space $(X,\mu)$ with $\mu$ a Radon measure. The zero sets $Z(X)$ of the algebra $C(X)$ is used to study the ergodicity of the $G(1)$-action via its action on the maximal ideals $\mathfrak{m}_x$ which defines an action groupoid $\mathcal{G} = \mathfrak{m}_x \ltimes G(1)$ trivialized on $X$. The resulting measure groupoid $(\mathcal{G},\mathcal{C})$ is used to define a proper action on the generalized space $\mathcal{M}(X)$. The existence of slice at each point of $\mathcal{M}(X)$ present it as a cohomogeneity-one $\mathcal{G}$-space. The dynamical system of the algebra $C(X)$ is defined by the action of the measure groupoid $(\mathcal{G},\mathcal{C}) \times \mathcal{M}(X) \to \mathcal{M}(X)$.
\end{abstract}
\let\thefootnote\relax\footnote{\emph{Mathematics subject Classification (2010):} 22A22, 37A05, 37A25, 37C80 }
\let\thefootnote\relax\footnote{\emph{Key words and phrases:} Measure groupoid, Cohomogeneity one $G$-space, Ergodic Action, slice and section, Generalized space.}

\section*{Introduction}
We start with the phase space constituted by the parametrized algebraic closed (geometric) points $\{\mathfrak{m}_x\}_{x \in X}$, the maximal ideals, of the commutative algebra $C(X)$ of continuous real-valued functions on a completely regular compact space $X$, the time evolution of the system is given by the continuous transformation $\varphi : X \to X$ of the compact space. The dynamical symmetries is given as the invariance of the system under the transformations of coordinates or parameters identifying its configurations, or the commutation of action of the symmetry (group) transformations with time evolution of the system. According to F. Strocchi \cite{Strocchi2008}, the transformation $\varphi : x \to \varphi(x)$ defines a symmetry if \\
(a) it induces an invertible mapping of configurations $\varphi_* : \mathfrak{m}_x \to \varphi_*(\mathfrak{m}_x) \equiv \mathfrak{m}_{\varphi(x)}$; \\ (b) it leaves the dynamical behaviour of the system invariant \[\alpha^t\varphi(\mathfrak{m}_x) = \alpha^t\mathfrak{m}_{\varphi(x)} \equiv \mathfrak{m}_{\varphi(x)(t)} = \mathfrak{m}_{\varphi(x(t))} = \varphi(\alpha^t\mathfrak{m}_x).\]
These are related to the concept of ergodicity as average over time as given by the iteration of transformations, related to average over space given by invariance of measures, and the measurable functions representing the observables. In the case of the action of a compact Lie group on a smooth manifold, the fixed condition of invariance of Haar measure gave rise to the cohomological equation, the solution of which afforded the harmonic analysis of the discrete dynamical system \cite{OkekeEgwe2017}. We replace the smooth manifold with the generalized space $\mathcal{M}(X)$ of Radon measures on $X$ in order to understand the nature of the symmetry preserved in the orbits.

We employ the cohomogeneity one groupoid $\mathcal{G}$-space analysis introduced in \cite{OkekeEgwe2018} to (measure) groupoid actions on the generalized space $\mathcal{M}(X)$ to achieve our goal. Both Peter Hahn \cite{Hahn78} and Arlan Ramsay \cite{Ramsay71} have used measure groupoid to understand Mackey's virtual groups and its associated measure theory. Our focus is more on the implication of the slice theorem on the measure groupoid approach to (von Neumann algebras) measure theory. The proper actions connect directly to slice theorem used in the cohomogeneity-one $\mathcal{G}$-space analysis. The main results of the paper are Theorems 3.2, 4.3, 5.6, 7.1 and Proposition 6.4.

\section{Algebraic and Topological Structures of $C(X)$}
According to Atiyah and Macdonald \cite{AtiyahDonald69}, prime ideals (which are modules) play a central role in commutative algebras because of their generalization of the idea of prime numbers in number theory and points in geometry. Further, the focusing or restriction of attention to neighbourhood of a point is subsequently captured by the process of localization at a prime ideal. The paper is on $C(X)$-the commutative algebra (lattice) of real-valued continuous functions on a compact space $X$. We will therefore focus our attention on the maximal ideals, in reformulation of the algebra into localized action groupoid. We recommend \cite{AtiyahDonald69} for basic understanding of a commutative ring or algebra. From the characterization of commutative rings or algebra we highlight the following facts connecting them immediately to groupoid.

The first is that by definition of an $R$-module $M$ of a ring $R$, the map $R \to End(M)$ is a representation of $R$ on the space of linear transformations of $M$. The second is that given a ring $R$ (or an algebra $\mathcal{A}$), by definition, the units of the ring $R$ form a group $G(1)$ with an action $G(1) \times M \to M$ on every $R$-module $M$. The action gives us an action groupoid or a transformation groupoid $\mathcal{G} = G(1) \ltimes M$ over each module $M$. Thus, $\mathcal{G} = G(1) \times M \rightrightarrows M$ is an action groupoid. We will now highlight the algebraic and topological structures of the commutative algebra $C(X)$ which will help us understand the structure of $X$ from the action groupoid trivialized on it.

According to Gillman and Jerison \cite{Gillman1960}, given a compact space $X$, every real valued function on $X$ is continuous if $X$ is discrete. So $\R^X = C(X)$. Conversely, whenever $\R^X = C(X)$ then the characteristic function of every set in $X$ is continuous, showing that $X$ is discrete. This equivalence between the continuity of a real valued functions on $X$ and the discreteness of $X$ is the foundation for using the algebraic variety (or the zero-sets) of $C(X)$ to characterize a regular compact space $X$ in \cite{Gillman1960}.
\begin{defx}\cite{Gillman1960}
A space $X$ is said to be completely regular if it is Hausdorff and whenever $F$ is a closed set and $x \in F^c$, there exists a function $f \in C(X)$ such that $f(x) = 1$ and $f(F) = \{0\}$.
\end{defx}
\begin{defx}
The closed sets $f^{-1}(0) = \ker(f) = \{x \in X : f(x) = 0\}$ are called the zero sets of $C(X)$. On the other hand, the open sets $pos f = \{x : f(x) > 0\}$ and $neg f = \{x : f(x) < 0\} = pos (-f)$ are called cozero sets since they are complements of zero sets. Alternatively, every cozero set is of the form $X - Z(f) = pos |f|$.
\end{defx}
The definition shows how the regularity of the space $X$ is connected to the zero-sets of $C(X)$. Hence, the zero-sets are characterized by using subsets of the form $$f^{-1}(r) = \{x \in X : f(x) = r\}  \equiv  f^{-1}(0) = \ker(f) = \{x \in X : f(x) = 0\}.$$
The importance of these subsets lies in the fact that they are closed subsets. The zero-sets of $C(X)$ define the essential properties of the space $X$, which are those properties invariant under translation of zero-sets or transformation of $X$. We refer the reader to \cite{Gillman1960} for a complete understanding of how the zero-sets define the algebraic and topological structures of the compact regular space $X$. Two maps are defined from the characterization of zero sets of $C(X)$ in $X$ which contain the essentials for our goal in this paper.

The first map is the zero-set map $Z : C(X) \to Z(X)$ defined by $f \mapsto Z(f)$. It maps a function $f \in C(X)$ to its zero-set or kernel in $X$.
The image of $C(X)$ under the zero-set map $Z$ is the family $Z(X)$ of all zero-sets which is a base for closed sets. The weak topology on $C(X)$ relates to the topology defined on $X$ by $Z(X)$ as the base of closed sets, possessing the following properties. (a) $Z(f) = Z(|f|) = Z(f^n), \; \forall \; n \in \mathbb{N}$; (b) $Z(0) = X, Z(1) = \emptyset$; (c) $Z(fg) = Z(f) \cup Z(g)$; (d) $Z(f^2 + g^2) = Z(|f| + |g|) = Z(f) \cap Z(g)$.

The following also characterize zero sets: (1) Every zero set is a $G_\delta$ (a countable intersection of open sets) since $Z(f) = \underset{n \in \mathbb{N}}\bigcap \{x \in X : |f(x)| < \frac{1}{n} \}$. (2) When the space $X$ is normal every closed $G_\delta$ is a zero-set. (3) When $X$ is a metric space, every closed set is a zero-set consisting of all points whose distance from it is zero. (4) Every set of the form $\{x : f(x) \geq 0\}$ is a zero-set, since $\{x : f(x) \geq 0\} = Z(f\wedge 0) = Z(f-|f|)$ (supremum) and $\{x : f(x) \leq 0\} = Z(f\vee 0) = Z(f + |f|)$ (infimum).

The second map is defined based on the correspondence between maximal ideals $\mathfrak{m}_x = \{f \in C(X) : x \in \ker f\}$ of $C(X)$, which are kernels of the surjective ring homomorphism $x : C(X) \to \R$ defined by $x(f) = f(x)$, and the points of $X$. This is given in \cite{AtiyahDonald69} as the homeomorphism $\mathfrak{z} : X \to C(X), x \mapsto m_x$ of $X$ onto the closed points of $C(X)$. The homeomorphism identifies $X$ with the algebraic closed points of $C(X)$ under the Zariski topology. Because the topological and the algebraic structures of $C(X)$ are related to the convergence of $z$-filters in $z$-ultrafilters of $X$, we have the following result the proof of which is evident.
\begin{prop}
The $C(X)$-topology on $X$ coincides with the topology induced on $X$ by the Zariski topology on $C(X)$.
\end{prop}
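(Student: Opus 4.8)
The plan is to compare the two topologies at the level of their closed sets by transporting the defining (sub-)base of closed sets of the Zariski topology along the bijection $\mathfrak{z}\colon X \to C(X)$, $x \mapsto \mathfrak{m}_x$, onto the closed points. First I would pin down the two descriptions to be matched. On the one hand, the $C(X)$-topology on $X$ is the topology having the family $Z(X)$ of zero-sets as a base for its closed sets; this is legitimate because, by property (d) above, $Z(f)\cap Z(g)=Z(f^2+g^2)\in Z(X)$, so $Z(X)$ is closed under finite intersections, and hence the closed sets of this topology are precisely the arbitrary intersections $\bigcap_{\alpha} Z(f_\alpha)$. On the other hand, the Zariski topology on $C(X)$ has as closed sets the sets $V(I)=\{\mathfrak{m} : I\subseteq \mathfrak{m}\}$ for ideals $I$ of $C(X)$, and each such set is the intersection $\bigcap_{f\in I} V(f)$ of the principal ones $V(f)=\{\mathfrak{m} : f\in \mathfrak{m}\}$; restricting to the closed points, this is the topology we pull back to $X$ via $\mathfrak{z}$.

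The heart of the matter is then the single identity
$\mathfrak{z}^{-1}\bigl(V(f)\bigr)=\{x\in X : f\in \mathfrak{m}_x\}=\{x\in X : f(x)=0\}=Z(f)$,
which holds because over a compact $X$ every maximal ideal of $C(X)$ is fixed, i.e. of the form $\mathfrak{m}_x$ for a (unique) $x\in X$, so that $\mathfrak{z}$ is a bijection onto the closed points and the condition $f\in \mathfrak{m}_x$ is equivalent to $f(x)=0$. Taking intersections, for any ideal $I$ one gets $\mathfrak{z}^{-1}(V(I))=\bigcap_{f\in I}Z(f)$, which is a closed set of the $C(X)$-topology; conversely, any closed set $\bigcap_{\alpha}Z(f_\alpha)$ of the $C(X)$-topology equals $\mathfrak{z}^{-1}(V(I))$ where $I$ is the ideal of $C(X)$ generated by the $f_\alpha$. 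Thus the families of closed sets correspond under the bijection $\mathfrak{z}$, and therefore the $C(X)$-topology on $X$ and the topology induced by the Zariski topology coincide.

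The only step that calls for any comment — and the reason the proof is described as evident — is the fact that over a compact $X$ every maximal ideal of $C(X)$ is of the form $\mathfrak{m}_x$ (the Gelfand--Kolmogorov theorem); this is exactly what underlies the homeomorphism of $X$ onto the closed points of $C(X)$ already quoted from \cite{AtiyahDonald69}, so in fact once that homeomorphism is granted there is essentially nothing left to prove beyond the bookkeeping of pulling a base of closed sets through a bijection. I would conclude by noting that, since $X$ is completely regular, the $C(X)$-topology described above is nothing but the original topology of $X$, so the proposition also records that the Zariski topology on the closed points of $C(X)$ faithfully recovers $X$, which is the identification used throughout the rest of the paper.
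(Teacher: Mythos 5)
Your proof is correct and takes essentially the same route as the paper, which simply declares the result evident by appealing to the zero-sets $Z(X)$ as a base for the closed sets together with the homeomorphism $\mathfrak{z}$ of $X$ onto the closed points of $C(X)$, so that $Z\circ\mathfrak{z}$ is the identity; you have merely supplied the bookkeeping (the identity $\mathfrak{z}^{-1}(V(f))=Z(f)$ and its extension to arbitrary ideals) that the paper omits. One minor remark: to justify that $Z(X)$ serves as a base for the closed sets of a topology, the relevant closure property is under finite \emph{unions}, $Z(f)\cup Z(g)=Z(fg)$ (property (c)), rather than finite intersections, though $Z(X)$ happens to enjoy both.
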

Given the complete regularity of $X$ which ensures the convergence of $z$-filters in the space, the zero-sets $Z(X)$ of $X$ form a base for the closed sets. Since each $z$-ultrafilter $Z[\mathfrak{m}_x]$ converges to $x$ for each $x \in X$, we have the composition of the two bijections $Z\circ \mathfrak{z} : X \to X$ to be an identity on $X$.
\section{Action of the Borel Group}
Given that each maximal ideal $\mathfrak{m}_x$ is a $C(X)$-module, it has $G(1)$-action, which we will now describe. For every $g \in G(1)$, $Z(g) = \emptyset$. Thus, $G(1)$ defines some actions on the maximal ideal $\mathfrak{m}_x$ which reflect on the $z$-ultrafilter $Z[\mathfrak{m}_x]$, corresponding to the maximal ideal, and converging to $x \in X$. We define the following actions.

First, given $f \in \mathfrak{m}_x$ and $g \in G(1)$, $Z(f)\cap Z(g) = \emptyset$, then $|f| + |g|$ is a unit since it has no zero set; hence $Z(|f| + |g|) = \emptyset$. We define an action $G(1) \times \mathfrak{m}_x \to \mathfrak{m}_x$ by \[ (g,f) \mapsto h = \frac{|f|}{|f| + |g|}, \; \text{or } g\cdot f = h =  |f|\cdot(|f| + |g|)^{-1}. \] Then $h$ is defined by $h[Z(f)] = \{0\}$ and $h[X-Z(f)] = (0,1)\}$. This is a Borel action of $G(1)$ on the maximal ideal $\mathfrak{m}_x$ which preserves the zero-sets $Z(f) = Z(h)$ and cozero-sets $X - Z(f) = X - Z(h)$ of $f \in \mathfrak{m}_x$. Thus, it is a $G(1)$-action on $Z(X)$-base of the topology on $X$; and a $G(1)$-action on the open neighbourhood basis of the $C(X)$-topology on $X$.

Second, there is a multiplicative action $G(1) \times \mathfrak{m}_x \to \mathfrak{m}_x$ defined by $\tau_g : f \mapsto gf$ or $f \mapsto fg^{-1}$, which also preserves the zero-set $Z(f) \mapsto Z(gf)$ or $Z(f) \mapsto Z(fg^{-1})$ and cozero sets $pos|f| \mapsto pos|gf|$ (or $pos|fg^{-1}|$ resp.) of $f \in \mathfrak{m}_x$. The action scales the values of the functions in $\mathfrak{m}_x$ on their cozero sets.

The combination of these actions gives affine Borel $G(1)$-action on each maximal ideal $\mathfrak{m}_x$, which reflects on the zero-sets and cozero-sets associated to the maximal ideals. By these $G(1)$-actions on $\mathfrak{m}_x$, we can therefore consider $G(1)$ as defining transformations for elements of the base (open/closed sets) of the topology of $X$ which are measurable or Borel sets.

Subsequently, we consider the preservation of the Borel structure induced by the $C(X)$-topology on $X$. The $G(1)$-action preserve the $C(X)$-induced topological structure on $X$ by continuity. For by continuity the $G(1)$-actions preserve the local topological and Borel structure, $\mathfrak{A} =\{X - Z(f)\}_{f \in \mathfrak{m}_x} = \{U_f : f \in \mathfrak{m}_x, x \in X\}$, which forms a subalgebra of the $\sigma$-algebra $(X,\mathcal{B})$ induced on $X$ by $C(X)$.

Given the weak topology on $X$, the $(X, \mathcal{B})$ is the Borel space associated with the topology on $X$ if $\mathcal{B}$ is the smallest $\sigma$-algebra of subsets of $X$ with respect to which all real-valued continuous functions are measurable. $\mathcal{B}$ is generated by sets of the form $f^{-1}(C)$, where $C$ is any closed subset of the real line and $f \in C(X)$. Within the background of translations of the zero-sets by elements of $G(1)$, we can define a Borel transformation of these open/closed base as follows.
\begin{defx}\cite{Varadarajan62}
An automorphism of a Borel space $(X,\mathcal{B})$ is a one-to-one map $\phi$ of $X$ onto itself such that $\phi^{-1}(A) \in \mathcal{B}$ if and only if $A \in \mathcal{B}$.
\end{defx}
\begin{defx}
Given the Borel subspace $(Y,\mathfrak{A})$, where $Y = \underset{f \in \mathfrak{m}_x}\bigcup U_f$, a map  $T : Y \to Y$ such that for $T^{-1}(A) \in \mathfrak{A} \; \iff \; A \in \mathfrak{A}$ is an automorphism of the Borel space $(Y,\mathfrak{A})$.
\end{defx}
It follows therefore that the family $\mathfrak{A} = (\{U_f : f \in \mathfrak{m}_x, x \in X \}$ separates the points of $X$. For it implies that for any pair $x,y \in X$, there is $U_f \in \mathfrak{A}$ such that $x \in U_f$ and $y \not \in U_f$. $X$ is therefore separated. The automorphism defined above is within a maximal or an ultrafilter, so ensures the meeting of images and pre-images.
\begin{prop}
Given that $\mathcal{A}$ is the Borel structure generated by the family $\mathfrak{A} = \{U_f\}_{f \in \mathfrak{m}_x}$. The map $\varphi : g \mapsto T_g$ is a representation of $G(1)$ on the group of automorphisms $Aut(\mathfrak{A})$ of the the generating family of the Borel $\sigma$-algebra $\mathcal{A}$, whereby $T_g : U_f \to U_{gf}$.
\end{prop}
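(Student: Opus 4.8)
The plan is to reduce the statement to properties of the multiplicative $G(1)$-action $\tau_g\colon f\mapsto gf$ on the maximal ideal $\mathfrak{m}_x$ introduced in Section~2, and then to transport that action along the cozero-set assignment $f\mapsto U_f = X - Z(f)$. First I would record that $\tau_g$ restricts to a bijection of $\mathfrak{m}_x$ onto itself: if $f\in\mathfrak{m}_x$ then $(gf)(x)=g(x)f(x)=0$, so $gf\in\mathfrak{m}_x$, and $f=g^{-1}(gf)$ exhibits $\tau_{g^{-1}}$ as a two-sided inverse of $\tau_g$. Associativity and unitality of multiplication in $C(X)$ give $\tau_g\circ\tau_h=\tau_{gh}$ and $\tau_e=\mathrm{id}$, so $\tau$ is a genuine left $G(1)$-action on $\mathfrak{m}_x$.

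Next I would set $T_g(U_f):=U_{gf}$ and verify this is a well-defined map of the generating family $\mathfrak{A}=\{U_f\}_{f\in\mathfrak{m}_x}$ to itself. Since $f\mapsto U_f$ is not injective, the point to check is independence of the representative: if $U_f=U_{f'}$, i.e. $Z(f)=Z(f')$, then because $Z(g)=\emptyset$ property~(c) of the zero-set map gives $Z(gf)=Z(g)\cup Z(f)=Z(f)=Z(f')=Z(gf')$, hence $U_{gf}=U_{gf'}$. (The same computation shows that $T_g$ in fact preserves each $U_f$ as a subset of $X$.) Combined with the bijectivity of $\tau_g$, this makes $T_g$ a bijection of $\mathfrak{A}$ onto $\mathfrak{A}$ with inverse $T_{g^{-1}}$.

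I would then check that each $T_g$ is an automorphism of the Borel subspace $(Y,\mathfrak{A})$ in the sense introduced above: because $T_g$ carries $\mathfrak{A}$ bijectively onto $\mathfrak{A}$, one has $T_g^{-1}(A)\in\mathfrak{A}\iff A\in\mathfrak{A}$, and a bijection of a generating family that is compatible with the Boolean and countable operations extends uniquely to an automorphism of the generated $\sigma$-algebra $\mathcal{A}$ --- here the compatibility is automatic since $T_g$ fixes each generator setwise. Hence $T_g\in\mathrm{Aut}(\mathfrak{A})$. The homomorphism property is then the formal identity $T_{gh}(U_f)=U_{(gh)f}=U_{g(hf)}=T_g\big(T_h(U_f)\big)$ for all $U_f\in\mathfrak{A}$, together with $T_e(U_f)=U_{ef}=U_f$; thus $\varphi(gh)=\varphi(g)\varphi(h)$ and $\varphi(e)=\mathrm{id}$, so $\varphi\colon G(1)\to\mathrm{Aut}(\mathfrak{A})$ is a representation.

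The step I expect to require the most care is the middle one: establishing that the set-level rule $U_f\mapsto U_{gf}$ is independent of the chosen $f$ and that it genuinely promotes from the generating family $\mathfrak{A}$ to an automorphism of the full $\sigma$-algebra $\mathcal{A}$. Once that is nailed down (it is, via $Z(gf)=Z(f)$), the bijectivity and the multiplicativity of $g\mapsto T_g$ follow purely formally from the $C(X)$-module structure of $\mathfrak{m}_x$.
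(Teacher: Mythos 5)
Your proposal is correct and follows essentially the same route as the paper: define $T_g(U_f)=U_{gf}$ and verify the group-action axioms via the zero-set identity $Z(gf)=Z(g)\cup Z(f)$. You are in fact more careful than the paper's own argument --- you check that $U_f\mapsto U_{gf}$ is well defined despite $f\mapsto U_f$ not being injective, and you note that $Z(g)=\emptyset$ forces $U_{gf}=U_f$, so each $T_g$ fixes every generator setwise (the paper records only the weaker inclusion $U_{g_2g_1f}\subseteq U_{g_2f}\cap U_{g_1f}$, which is actually an equality); this makes the representation rigorous, at the price of exposing that it acts trivially on $\mathfrak{A}$ as a family of subsets of $X$.
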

\begin{proof}
The families $\mathfrak{B} = (\{U_f : f \in \mathfrak{m}_x, x \in X\}$ constitute a basis for the topology on $X$. By definition, the open set $U_f$ is a cozero set of $f$, for each $f \in \mathfrak{m}_x \subseteq C(X)$. Thus, since $\mathcal{B}$ is the Borel structure associated with the $C(X)$-topology on $X$, and $G(1)$ is the group of units in $C(X)$, then the map, $\varphi : G(1) \times \mathcal{B} \to \mathcal{B}$ is a fibred action; when restricted to a generating subfamily $\mathfrak{A}$, corresponding to $Z[\mathfrak{m}_x], x \in X$, it is defined by $\varphi(g)(U_f) = U_{gf}$ and satisfies the properties of a group action:\\
(i) $\varphi(g_2g_1)(U_f) = \varphi(g_2)(U_{g_1f}) = U_{g_2g_1f} \subseteq U_{g_2f} \cap U_{g_1f} \in \mathfrak{A}$; \\ (ii) $\varphi(1)(U_f) = U_f$, for any $U_f \in \mathfrak{A}$.\\
Hence, the map $g \mapsto T_g$, where $g \in G(1), U_f \in \mathfrak{A}$ for each $x \in X$, defines a representation of $G(1)$ on the group of automorphisms $Aut(\mathfrak{A})$ of the generating set at each maximal ideal $\mathfrak{m}_x$ constituting the Borel $\sigma$-algebra $\mathcal{B}$.
\end{proof}
\begin{coro}
The families of open sets $\{ U_f : f \in \mathfrak{m}_x, x \in X\}$ with the action of $G(1)$ generate the Borel structure $(X, \mathcal{B})$, the $\sigma$-algebra of $X$.
\end{coro}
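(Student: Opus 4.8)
The plan is to reduce the assertion to the description of $\mathcal{B}$ already recorded in the text, via the identification of $\{U_f : f\in\mathfrak{m}_x,\ x\in X\}$ with the family of all proper cozero sets of $C(X)$. First I would note that on the compact space $X$ a function $f\in C(X)$ is a unit if and only if $Z(f)=\emptyset$: if $f$ never vanishes then $1/f\in C(X)$, and the converse is clear. Hence if $f$ is not a unit then $Z(f)\neq\emptyset$ and $f\in\mathfrak{m}_x$ for every $x\in Z(f)$, while if $f$ is a unit then $U_f = X-Z(f) = X$. Letting $x$ range over all of $X$, it follows that
\[
\{U_f : f\in\mathfrak{m}_x,\ x\in X\}\cup\{X\} \;=\; \{\,X-Z(f) : f\in C(X)\,\},
\]
the family of all cozero sets; since $X$ belongs to every $\sigma$-algebra, this family and the family $\mathfrak{A}$ together generate the same $\sigma$-algebra.

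Next I would link the cozero sets to $\mathcal{B}$. Recall $\mathcal{B}$ is generated by the sets $f^{-1}(C)$ with $C\subseteq\R$ closed and $f\in C(X)$; using second countability of $\R$ this is the same as the $\sigma$-algebra generated by the half-lines $f^{-1}((a,\infty))$, $a\in\R$. By property (4) of the zero-set calculus, $\{x:f(x)\le a\}=Z\big((f-a)\vee 0\big)$ is a zero set, so $f^{-1}((a,\infty))=\{x:f(x)>a\}=X-Z\big((f-a)^{+}\big)=U_{(f-a)^{+}}$ with $(f-a)^{+}=\tfrac12\big((f-a)+|f-a|\big)\in C(X)$ is a cozero set — the trivial set $X$ when $f>a$ everywhere, and otherwise a member of the family above (take $x\in Z((f-a)^{+})$). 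Conversely, each cozero set $X-Z(g)=pos\,|g|$ equals $|g|^{-1}((0,\infty))$. Thus the cozero sets generate exactly $\mathcal{B}$, and combined with the previous paragraph, $\sigma\big(\{U_f : f\in\mathfrak{m}_x,\ x\in X\}\big)=\mathcal{B}$.

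Finally, the $G(1)$-action changes nothing: by the preceding Proposition the action is given by $T_g : U_f\mapsto U_{gf}$, and $U_{gf}$ is again a cozero set, so the $G(1)$-saturation of $\{U_f\}$ stays inside $\mathcal{B}$, while the unsaturated family already generates $\mathcal{B}$; hence the family together with the $G(1)$-action generates the Borel structure $(X,\mathcal{B})$. The step that requires genuine care — the main obstacle — is the first one: verifying that restricting the generators to $f\in\mathfrak{m}_x$ rather than to arbitrary $f\in C(X)$ costs nothing in the generated $\sigma$-algebra. This is exactly where compactness of $X$ (so that units correspond to empty zero sets) and the freedom to let $x$ range over all of $X$ (so that every proper cozero set is some $U_f$ with $f$ in some $\mathfrak{m}_x$) are used.
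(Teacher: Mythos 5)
Your argument is correct, and it is genuinely more detailed than what the paper offers: the paper does not really prove the corollary, but merely invokes Mackey's notion of a generated Borel structure and asserts that, since $\mathfrak{B}=\{U_f : f\in\mathfrak{m}_x,\ x\in X\}$ is a base for the $C(X)$-topology, the generated $\sigma$-algebra is the one associated with that topology. That appeal leaves two gaps which you close explicitly. First, you verify that restricting the parameters to $f\in\mathfrak{m}_x$ loses nothing, via the observation that on a compact space $f$ is a unit iff $Z(f)=\emptyset$, so every proper cozero set is $U_f$ for some $f$ lying in some $\mathfrak{m}_x$, and the unit case contributes only the trivial set $X$. Second, rather than arguing through ``base of the topology'' (which, for a general compact Hausdorff space, would only give the Baire sets unless one adds a countability hypothesis), you work directly from the paper's own definition of $\mathcal{B}$ as the $\sigma$-algebra generated by the sets $f^{-1}(C)$, reducing to half-lines and using the zero-set identity $\{x: f(x)\le a\}=Z((f-a)\vee 0)$ in one direction and $X-Z(g)=|g|^{-1}((0,\infty))$ in the other; this shows the cozero sets generate exactly $\mathcal{B}$ in the sense the paper intends. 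Your closing remark that the $G(1)$-action adds nothing is also right (indeed $Z(gf)=Z(g)\cup Z(f)=Z(f)$ for $g\in G(1)$, so $U_{gf}=U_f$ and the family is already $G(1)$-saturated). In short: same conclusion, but your route supplies the verification the paper omits, and it makes visible exactly where compactness and the range of $x$ over all of $X$ are used.
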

A family $\mathfrak{B} = \{U_f : f \in \mathfrak{m}_x, x \in X\}$ of subsets of a set $X$, according to Mackey \cite{Mackey57}, is said to generate a unique Borel structure $\mathcal{B}$ on $X$ if it is the smallest Borel structure $\mathcal{B}$ for $X$ which contains $\mathfrak{B}$. In this case, since $\mathfrak{B}$ is the base of the topology on $X$, the Borel structure $\mathcal{B}$ is associated with the topology on $X$.

\subsection{$C(X)$-Embedding and Measurability}
Given the weak topology on $X$ and the Borel structure generated by the base of closed (or open) sets associated with this topology, we define a homeomorphism as follows. For each $f \in \mathfrak{m}_x$ and $g \in G(1)$, $Z(f)\cap Z(g) = \emptyset$, hence $Z(|f| + |g|) = \emptyset$, define \[ \tau_g : f \mapsto \tau(f,g) = \frac{|g|}{|f| + |g|}. \] Then $\tau_g$ is a measurable function define by each $g \in G(1)$ on the fibre (or on the local bisection) of the groupoid, defined by $\tau_g(f) = \tau(f,g)$. This functions map the zero sets $Z(f)$ to $\{1\}$ and the cozero sets $X-Z(f)$ to the open set $(0,1)$. Thus, it is a homeomorphism on the regular space $X$, such that $\tau(f,g) : X \to (0, 1] \simeq \mathbb{T}$. Thus, the compact space $X$, with the weak topology, is isomorphic to the torus $\mathbb{T}$.

Corresponding to this homeomorphism is a $G(1)$-action defined on the neighbourhood basis (or on the base of closed sets $Z(X)$) of the weak topology on $X$. By definition, the zero sets are balanced sets of the topological space ($X,C(X)$-topology), which is a topological vector space(tvs). The above homeomorphism, taking open subsets of $X$ to open subsets of $\mathbb{T}$, helps us to understand the Borel structure of $G(1)$ and the notion of $C(X)$-embedding of closed sets of $\R$ which is defined in \cite{Gillman1960} as follows.
\begin{defx}
A subspace $S \subset X$ is said to be $C(X)$-embedded if every function in $C(S)$ can be extended to a function in $C(X)$.
\end{defx}
Since every closed set in a metric space is a zero-set, and disjoint sets are completely separated; if the subset $S$ is closed, then its closed subsets are closed in $X$, and completely separated sets in $S$ have disjoint closures in $X$; it follows by Tietse's Extension Theorem that every closed set in $S$ is $C(X)$-embedded. This rests on the fact that every closed set in $\R$ is $C(X)$-embedded. The following theorem expresses this idea.
\begin{thm}\cite{Gillman1960}
If there exists a homeomorphism (function) in $C(X)$ taking $S$ onto a closed set in $\R$, then $S$ is $C(X)$-embedded in $X$.
\end{thm}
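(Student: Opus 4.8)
The plan is to transfer the extension problem on $S$ to a closed subset of the real line, where solvability is already available, and then pull the solution back along $f$. Let $f \in C(X)$ be the given function, put $F := f(S)$, and regard $f|_S$ as a map $S \to F$; by hypothesis $F$ is closed in $\R$ and $f|_S : S \to F$ is a homeomorphism, hence possesses a continuous inverse $(f|_S)^{-1} : F \to S$.

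First I would take an arbitrary $h \in C(S)$ and form $\tilde h := h \circ (f|_S)^{-1} : F \to \R$, which is continuous as a composition of continuous maps. Next, using the fact recalled just above the statement --- that every closed subset of $\R$ is $C$-embedded in $\R$, which follows from Tietze's extension theorem (concretely, by interpolating linearly across the complementary open intervals) --- I would choose $k \in C(\R)$ with $k|_F = \tilde h$.

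Finally I would verify that $k \circ f \in C(X)$ is the desired extension of $h$: it is continuous because $k$ and $f$ are, and for every $s \in S$ we have $f(s) \in F$ and $(f|_S)^{-1}(f(s)) = s$, so $(k \circ f)(s) = k(f(s)) = \tilde h(f(s)) = h\big((f|_S)^{-1}(f(s))\big) = h(s)$. As $h \in C(S)$ was arbitrary, every element of $C(S)$ extends to an element of $C(X)$; that is, $S$ is $C(X)$-embedded. Note that the argument never invokes normality of $X$: the relevant normality is borrowed from $\R$ through $f$.

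The only genuinely nontrivial ingredient is the one-dimensional extension fact used in the middle step; the rest is a routine chase through compositions and the defining property of a homeomorphism. Accordingly, I expect the single real obstacle to be proving --- or, since it is cited here, invoking accurately --- that closed subsets of $\R$ are $C$-embedded, together with the minor but essential care that the hypothesis be read as a homeomorphism onto $F$ with its subspace topology, not merely a continuous bijection, since it is exactly the continuity of $(f|_S)^{-1}$ that makes $\tilde h$ continuous.
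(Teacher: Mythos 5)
Your argument is correct and is essentially identical to the paper's own proof: you transfer $h \in C(S)$ to $\tilde h = h \circ (f|_S)^{-1}$ on the closed set $F = f(S) \subset \R$, extend via the $C$-embedding of closed subsets of $\R$ (Tietze), and pull back along $f$, which is exactly the paper's construction with $\tau$, $\theta = \tau^{-1}|_{\tau(S)}$, and $g \circ \tau$. Your closing remark that the hypothesis must be a homeomorphism onto its image (so that the inverse is continuous) is a point the paper uses implicitly but does not spell out.
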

\begin{proof}
Let $\tau$ be the homeomorphism (function) in $C(X)$. Then $\theta = \tau^{-1}|_{\tau(S)}$ is a continuous mapping from $H = \tau(S)$ onto $S$, with $\theta(\tau(s)) = s \in S$. Let $f \in C(S)$ be arbitrary. The composite function $f \circ \theta$ is in $C(X)$. Since $H \subset \R$ is closed, by hypothesis, it is embedded; so, there exists $g \in C(\R)$ that agrees with $f\circ \theta$ on $H$. Then $g \circ \tau \in C(X)$, and for all $s \in S$, we have $(g\circ \tau)(s) = f(\theta(\tau(s))) = f(s)$, which means that $g\circ \tau$ is an extension of $f$.
\end{proof}
The homeomorphism $\tau$ is a unit in $C(X)$. Thus, the $G(1)$-action is related to the $C(X)$-embedding of closed sets of $\R$, and expresses its Borel property. It also highlights the local convexity of the regular compact space $X$ as a $G(1)$-space. We give this as a corollary.
\begin{coro}
The $G(1)$-action on every completely regular compact space $X$, $C(X)$-embeds closed subsets of $\R$ in $X$ thereby making it locally convex.
\end{coro}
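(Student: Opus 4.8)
The plan is to obtain the corollary by feeding the explicit unit $\tau_g$ constructed just above into the $C(X)$-embedding theorem. First I would fix a completely regular compact space $X$ and an arbitrary closed subset $S \subseteq \R$. For any $f \in \mathfrak{m}_x$ and $g \in G(1)$ the disjointness $Z(f) \cap Z(g) = \emptyset$ forces $Z(|f| + |g|) = \emptyset$, so $|f| + |g|$ is a unit and $\tau_g(f) = |g|\,(|f| + |g|)^{-1} \in C(X)$; moreover $\tau_g$ is a homeomorphism carrying $X$ onto $\tau_g(X) \subseteq (0,1]$, a closed subset of $\R$, and $\{\tau_g : g \in G(1)\}$ is precisely a $G(1)$-orbit of such homeomorphisms. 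Composing $\tau_g$ with an affine homeomorphism of $\R$ that sends $\tau_g(X)$ onto a set containing $S$, one gets a function in $C(X)$ that takes a subspace of $X$ homeomorphically onto $S$ as a closed subset of $\R$; the $C(X)$-embedding theorem stated above then gives at once that this subspace is $C(X)$-embedded. Thus the $G(1)$-action is exactly what manufactures, at each point $x \in X$, a $C(X)$-embedded copy of every closed subset of $\R$.

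Next I would establish the local convexity. The text has already flagged the two facts I need: the zero-sets $Z(f)$ are balanced subsets of the topological vector space $(X, C(X)\text{-topology})$, and the cozero-sets $U_f = X - Z(f)$ form a neighbourhood basis of the weak topology on $X$. Since $\tau_g$ is a homeomorphism sending each $U_f$ onto the convex open set $(0,1) \subseteq \mathbb{T}$, I would pull this convexity back along $\tau_g^{-1}$ to exhibit, at each $x$, a neighbourhood basis $\{U_f\}_{f \in \mathfrak{m}_x}$ whose members inherit a convex structure from $(0,1)$. Combining this with the invariance of the cozero-sets under both the affine action $g \cdot f = |f|\,(|f|+|g|)^{-1}$ and the multiplicative action $\tau_g : f \mapsto gf$ recorded in Section 2, one presents $X$ as a $G(1)$-space equipped with a $G(1)$-stable convex local base, which is the assertion of local convexity.

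The step I expect to be the main obstacle is making the local-convexity conclusion genuinely rigorous rather than merely formal: ``convex'' is meaningful on $X$ only with respect to the affine structure that $C(X)$ and the $G(1)$-action impose on the cozero-sets, so one must check that the convex structure transported through $\tau_g$ from $(0,1)$ does not depend on the choice of $g$ (up to the action) and that it patches consistently across the basis $\mathfrak{A} = \{U_f\}_{f \in \mathfrak{m}_x}$. I would handle this by working inside a single $z$-ultrafilter $Z[\mathfrak{m}_x]$ — where, as the paper observes, images and preimages of the Borel automorphisms necessarily meet — verifying that the maps $\tau_g$ restricted to that fibre intertwine the affine structure, so that local convexity at $x$ is well defined; the facts that $Z(X)$ is a base for the closed sets of $X$ and that $X$ is compact then globalize the statement.
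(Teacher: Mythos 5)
Your overall strategy --- feeding the explicit unit $\tau_g = |g|\,(|f|+|g|)^{-1}$ into the embedding theorem stated just above the corollary --- is exactly the route the paper intends; the paper offers no separate proof beyond the remark that ``the homeomorphism $\tau$ is a unit in $C(X)$,'' so the corollary is meant to be read off from that theorem in essentially the way you describe.

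The genuine gap is in your quantifier: you claim to manufacture a $C(X)$-embedded copy of \emph{every} closed subset $S \subseteq \R$, including unbounded ones, and this cannot work. Since $X$ is compact, $\tau_g(X)$ and every affine image of it is a compact, hence bounded, subset of $\R$, so no affine homeomorphism of $\R$ carries $\tau_g(X)$ onto a set containing an unbounded $S$; the composition step of your first paragraph breaks down there. Worse, the conclusion would contradict a fact the paper itself records immediately after the corollary: a space is pseudocompact if and only if it contains no $C(X)$-embedded copy of $\mathbb{N}$, and a compact $X$ is pseudocompact, so $\mathbb{N}$ (a closed subset of $\R$) admits no $C(X)$-embedded copy in $X$. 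The paper accordingly restricts the assertion to closed bounded intervals $[a,b]$, and your argument must do the same. For the local-convexity half you are right that the paper supplies no argument; your device of transporting the convex structure of $(0,1)$ back along $\tau_g^{-1}$ is a plausible reading, and your own caveat --- that convexity on $X$ is only meaningful relative to the affine structure the $G(1)$-action imposes, and that independence of the choice of $g$ and consistency across the basis $\{U_f\}_{f \in \mathfrak{m}_x}$ must be verified --- identifies precisely what remains unproved both in your sketch and in the paper.
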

\begin{defx}
A topological space $X$ is said to be pseudocompact if its image under any continuous real-valued function is bounded.
\end{defx}
The compactness of $X$ guarantees that $\tau$ is a homeomorphism $X \to [0,1] \simeq \mathbb{T}$ which $C(X)$-embeds closed subsets $[a,b]$ of $\R$ in $X$. A space $X$ is pseudocompact if and only if it contains no $C(X)$-embedded copy of $\mathbb{N}$ as shown in \cite{Gillman1960}. Thus, the general Borel transformations can be modelled on the homeomorphism $\tau$ approximated by the action $(0,1] \times X \to X$, where $((t,x),y)$ is the graph of a continuous transformation $g(t) : x \mapsto y$.
\begin{rem}
We therefore make the following remarks. \\
(1) The action is a presentation of $X \times [0,1]$ as a locally convex topological vector space by the continuous functionals $\tau : C(X) \to [0,1]$ separating its points. \\ (2) With the notion of $C(X)$-embedding, we see that $\tau$ defines a norm, and hence, a metric on $X$, which brings us to measures on $X$, and the decomposition of a finite (non-ergodic) Borel measure $\mu$ on $X$ with respect to an ultrafilter $Z[\mathfrak{m}_x]$, and the generating set $\mathfrak{B} = \{U_f : f \in \mathfrak{m}_x, x \in X\}$ of the $\sigma$-algebra $\mathcal{B}$.
\end{rem}

\section{The Generalized Space and Dynamics}
The above action shows that $G(1)$ is a Borel group. The dynamical system defined by the Borel group $G(1)$ is by its ergodic action generalized in the action of the algebra/lattice $C(X)$) on the generalized space $\mathcal{M}(X)$, the space of nonnegative Radon measures on $X$. According to \cite{EinsWard2011}, the time evolution of dynamical systems modelled by measure-preserving actions of integers $\mathbb{Z}$ or real numbers $\R$ which represent passage of time are generalized by measure-preserving actions of \emph{lattices} which are usually "subgroup" of Lie groups.

Each maximal ideal $\mathfrak{m}_x$ characterizes and encodes the symmetries of the measurable functions vanishing at each point of $X$ and on its respective neighbourhoods. The symmetry is represented by the $z$-ultrafilter of zero sets (algebraic varieties of $C(X)$) converging to each $x \in X$. The complements of these algebraic sets constitute the open neighbourhood base of points of $X$. The ultrafilter $\mathcal{F}$ convergence to $x$ has associated nets of measurable functions converging to a fixed point function $f$ defined on $x$. This follows from the correspondence between filters and their derived nets. Given a net $f_\alpha$ of contractions in the complete metric space $X$, as described in \cite{HasselblattKatok}, it follows that $f_\alpha \to f$ such that $f(x) = x$. We will represent all these on the generalize space $\mathcal{M}(X)$ with ergodic action of $C(X)$. But we need to recall the ideas of a generalized subset and an ergodic subgroup.

Mackey's conception of a measure class $C$ as a \emph{generalized subset} was extended to the whole space $\mathcal{M}(X)$ of Radon measures on $X$ which is conceived as the \emph{generalized space} of points or \emph{state space} (see \cite{EinsWard2011}). At the centre of this extension is the focus on (i) measure preserving transformations of the compact metric space $X$, and (ii) the Dirac measures $\delta_x$ as generalized or geometric points isomorphic to points of $X$. The role assigned to the ergodic transformations by Mackey, is to translate along time in such a way as to ensure the invariance of measure or state.

Recall the surjective homomorphism $T_x : C(X) \to [0,1)$ which has the maximal ideal $\mathfrak{m}_x$ as its kernel. It follows that a continuous linear transformation $\varphi$ on $X$, by its relation to $C(X)$ as shown above, induces another continuous transformation on the generalized space $\varphi_* : \mathcal{M}(X) \to \mathcal{M}(X)$, and thereby defines an action of the algebra $C(X)$ on the $\mathcal{M}(X) \simeq L^\infty(X,\mu)$. Notice that the replacement of $\R$ with $[0,1)$ makes all transformations of $X$ contractions.  This translates the whole structure associated to $x \in X$, (the zero sets, the null sets which coincide with the zero sets, and the maximal ideal) from one point to the other.

Thus, the restriction of $\varphi_*$ to the subset $\mathcal{U} = \{\delta_x : x \in X\}$ gives a transformation of $\mathcal{U}$ defined as $\varphi_* : \delta_x \mapsto \delta_{\varphi(x)}$, such that for any $A \subseteq X$, we have \[ (\varphi_*\delta_x)(A) = \delta_x(\varphi^{-1}A) = \delta_{\varphi(x)}(A). \] Without the restriction the subset $\mathcal{U} = \{\delta_x : x \in X\} \subset \mathcal{M}(X)$ of the generalized points, can be continuously and affinely extended to the generalized space $\mathcal{M}(X)$. Subsequently, if the generalized points $\mathcal{U} = \{\delta_x : x \in X\}$ can generate the generalized space $\mathcal{M}(X)$, then the generalized subsets $\{ C_f : f \in \mathfrak{m}_x\}$ (the measure classes) can be generated from the generalized points, the Dirac measures $\delta_x$. We give this as a result.
\begin{prop}
The generalized space $\mathcal{M}(X)$ of Radon measures on $X$ is an affine and continuous extension of the geometric points $\mathcal{U} := \{\delta_x : x \in X\}$.
\end{prop}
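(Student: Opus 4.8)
The plan is to realise $\mathcal{M}(X)$, equipped with the vague (weak-$*$) topology $\sigma(\mathcal{M}(X),C(X))$ coming from the Riesz duality with $C(X)$, as the weak-$*$-closed convex cone generated by the Dirac masses, and to show that $x\mapsto\delta_x$ is a homeomorphic embedding $X\hookrightarrow\mathcal{M}(X)$ onto $\mathcal{U}$ through which all of $\mathcal{M}(X)$ is recovered by barycentre integration. I would proceed in four steps. First, set up the functional-analytic frame: by the Riesz representation theorem on the compact space $X$, $\mathcal{M}(X)$ is the cone $C(X)^{*}_{+}$ of positive functionals, the probability measures $\mathcal{M}_1(X)$ form a base of this cone (so $\mathcal{M}(X)=\bigcup_{t\ge 0}t\,\mathcal{M}_1(X)$), and by Banach--Alaoglu $\mathcal{M}_1(X)$ is convex and weak-$*$ compact; it therefore suffices to treat $\mathcal{M}_1(X)$ and then rescale by $[0,\infty)$.

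Second, I would show $j:X\to\mathcal{M}_1(X)$, $x\mapsto\delta_x$, is a topological embedding onto $\mathcal{U}$. Injectivity is exactly the point-separation of the family $\mathfrak{A}=\{U_f\}$ noted after Definition 2.2 (equivalently, complete regularity supplies $f\in C(X)$ with $f(x)\neq f(y)$). For continuity and the continuity of the inverse, note that a net $x_\alpha\to x$ in $X$ iff $f(x_\alpha)\to f(x)$ for every $f\in C(X)$ — this is the weak $=C(X)$-topology on $X$, which by Proposition 1.3 is the one in play — and this says precisely $\langle f,\delta_{x_\alpha}\rangle\to\langle f,\delta_x\rangle$, i.e. $\delta_{x_\alpha}\to\delta_x$ vaguely. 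Since $X$ is compact and $\mathcal{M}_1(X)$ Hausdorff, $j$ is a homeomorphism onto its compact, hence weak-$*$ closed, image $\mathcal{U}$.

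Third, I would identify $\mathcal{U}$ with the extreme boundary $\operatorname{ext}\mathcal{M}_1(X)$: each $\delta_x$ is extreme, and if $\mu\in\mathcal{M}_1(X)$ is not a point mass then $\operatorname{supp}\mu$ has two points, so (the cozero sets being a base on the Hausdorff space $X$) there is a cozero, hence Borel, set $A$ with $0<\mu(A)<1$, whence $\mu=\mu(A)\,\mu_A+(1-\mu(A))\,\mu_{A^{c}}$ with $\mu_A:=\mu(\,\cdot\,\cap A)/\mu(A)$ and $\mu_{A^{c}}$ distinct in $\mathcal{M}_1(X)$, so $\mu$ is not extreme. Hence $\operatorname{ext}\mathcal{M}_1(X)=\mathcal{U}$, and Krein--Milman gives $\mathcal{M}_1(X)=\overline{\operatorname{conv}}\,\mathcal{U}$; moreover for every $\mu$ the identity $\mu=\int_X\delta_x\,d\mu(x)$ holds weakly (test against $f\in C(X)$), exhibiting $\mu$ as the barycentre of a probability measure carried by $\mathcal{U}\cong X$.

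Fourth, I would assemble the ``affine and continuous extension'' statement as a universal property: for any compact convex set $K$ in a locally convex space and any continuous $\psi:X\to K$, the map $\tilde\psi(\mu):=$ barycentre of $\psi_*\mu$ is well defined, affine, and weak-$*$ continuous, and $\tilde\psi\circ j=\psi$ uniquely; taking $K=\mathcal{M}_1(X)$, $\psi=j$ recovers $\tilde\psi=\mathrm{id}$. Thus $(\mathcal{M}_1(X),j)$ is the free compact convex set on $X$, and extending by positive scalars yields the analogous property for the compact-based convex cone $\mathcal{M}(X)$ over $\mathcal{U}$ — precisely the asserted affine, continuous extension of the geometric points; in particular (cf. Remark 2.9(2)) the measure classes $C_f$, $f\in\mathfrak{m}_x$, are recovered as images of sub-probability integrals over $\mathcal{U}$. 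The genuinely delicate point is Step 2 with the closedness of $\mathcal{U}$: continuity of $x\mapsto\delta_x$ is immediate, but that it is an \emph{embedding} rests on the topology of $X$ being the initial one for $C(X)$, which is where complete regularity (through the separating family $\mathfrak{A}$) is indispensable; the remainder is Krein--Milman and barycentre bookkeeping.
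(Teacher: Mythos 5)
Your proposal is correct, but it takes a genuinely different route from the paper's own argument. You give the statement its classical Choquet-theoretic meaning: via Riesz duality you realise $\mathcal{M}(X)$ as the positive cone $C(X)^{*}_{+}$ with weak-$*$ compact convex base $\mathcal{M}_1(X)$, show that $x \mapsto \delta_x$ is a homeomorphism of $X$ onto the extreme boundary $\operatorname{ext}\mathcal{M}_1(X) = \mathcal{U}$ (where the fact that the topology of $X$ is initial for $C(X)$, i.e.\ complete regularity, is exactly what makes the embedding work), and then invoke Krein--Milman, the barycentre identity $\mu = \int_X \delta_x\, d\mu(x)$, and the universal property of the free compact convex set to pin down what ``affine and continuous extension'' means. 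The paper instead argues through its own machinery: the coincidence of zero sets of $C(X)$ with null sets of measures, the convergence of the $z$-ultrafilter $Z[\mathfrak{m}_x]$ producing nets of non-ergodic measures converging to $\delta_x$, and the transfer of the two $G(1)$-actions on $\mathfrak{m}_x$ to the fibres of measure classes via $\varphi_*$, concluding that $\mathcal{U}$ is the $G(1)$-invariant base on which the $\mathfrak{m}_x$-decomposition generates $\mathcal{M}(X)$. Your version buys rigour and a clean universal property; the paper's version buys continuity with the rest of the development, since the $G(1)$-invariance of $\mathcal{U}$ and of the measure-class fibres (the ``tangent measures'') is what feeds the ergodicity and groupoid arguments in the subsequent sections. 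If you want your proof to serve that downstream purpose, add one sentence observing that the $G(1)$- and $\mathfrak{m}_x$-actions preserve null sets, hence act affinely on $\mathcal{M}_1(X)$, fix $\mathcal{U}$ pointwise up to the induced transformation $\delta_x \mapsto \delta_{\varphi(x)}$, and permute the measure classes.
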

\begin{proof}
The coincidence of zero sets of $C(X)$ with null sets of $\mathcal{M}(X)$ establishes the existence of: (a) measure classes and (b) nets $\{\mu_\alpha\}$ of non-ergodic Radon measures related to $\{\mu_f : f \in \mathfrak{m}_x, x \in X\}$ which converge to the Dirac measures $\{\delta_x : x \in X\}$ as the $z$-ultrafilter $Z[\mathfrak{m}_x]$ converges $x$. Since the elements of $\mathfrak{m}_x$ vanish at $x$, its $G(1)$-action is transferred to the fibres of measure classes via $\varphi_*$. The fibres of measures classes constitute tangent measures to $\delta_x$, with a representation of $G(1)$, similar to slice representation in group action.

The transformations $\varphi_* : \mathcal{M}(X) \to \mathcal{M}(X)$ are the natural continuous and affine extensions of the generalized points $\mathcal{U} = \{\delta_x : x \in X\}$ to the generalized space $\mathcal{M}(X)$. This is shown by considering the generalized space $\mathcal{M}(X)$ as generated by the decomposition action of the $C(X)$-modules $\mathfrak{m}_x$, whereby $\mu \mapsto \mu_{gf}$ preserves the null set $Z(f)$. This decomposition preserves the two actions of the Borel group $G(1)$ defined above on $\mathfrak{m}_x$, which generate $\mathcal{M}(X)$ on the base space $\mathcal{U} = \{\delta_x : x \in X\}$. So the base space $\mathcal{U}$ is invariant under direct $G(1)$-action corresponding to the $G(1)$ defined Borel transformations on $X$. Thus, the Dirac measures $\delta_x \in \mathcal{U}$ constitute the geometric/generalized points of the generalized space $\mathcal{M}(X)$.
\end{proof}
This result leads to complementary ways of understanding the concept of ergodicity which is characterized in different ways for a measure and for a transformation in \cite{EinsWard2011}. For the measure space $(X,\mathcal{B},\mu)$ with a transformation $\varphi : X \to X$, it means indecomposability of $X$ into two $\varphi$-invariant measurable subsets. But for a measure preserving transformation $\varphi : X \to X$ of a probability space $(X,\mathcal{B},\mu)$, it said to be ergodic if for any $B \in \mathcal{B}, \varphi^{-1}B = B \; \implies \; \mu(B) = 0$, or $\mu(B) = 1$. The $\varphi$-invariant measure $\mu$ is also called ergodic. See \cite{EinsWard2011}.

Hence, a non-ergodic measure is infinitely decomposable with respect to a measure-preserving transformation $\varphi$, such that a set of $\varphi$-invariant measures contains the ergodic measures which is $\varphi$-indecomposable as its boundary points. Thus, a closed subset of $\varphi$-invariant measures in $\mathcal{M}(X)$ is either a singleton (when the measure is ergodic) or an infinite set when it contains non-ergodic measures. In the latter, the boundary points are ergodic. Each boundary point makes a complete ergodic meaning with the nets converging to it. This net completeness defines the nature of the (geometric) point and the dynamics associated to it.

The dynamism defined by the transformation $\varphi$ is encoded in the symmetry of the measures classes contributing to the convergent net. So, the connection between ergodic theory and the dynamical system defined by continuous transformations on compact metric spaces, according to \cite{EinsWard2011}, is captured by the closure of the resulting convex set of non-ergodic $\varphi$-invariant measures with ergodic measures as a boundary point. This means a closed set assures the convergence of every net in the set. The connection is therefore based on the ergodicity of both measures and transformations. The following is an important result of this section.
\begin{thm}
The action $\varphi$ of the commutative algebra $C(X)$ on the generalized space $\mathcal{M}(X)$, defined by the Borel group $G(1)$ and the maximal ideals $\mathfrak{m}_x$, has an ergodic limit.
\end{thm}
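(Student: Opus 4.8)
The plan is to realise the \emph{ergodic limit} as an extreme point of the weak-$*$ compact convex set of $\varphi$-invariant Radon probability measures on $X$, reached as the weak-$*$ limit of a net of non-ergodic $\varphi$-invariant measures indexed by the $z$-ultrafilter $Z[\mathfrak{m}_x]$ attached to a point $x \in X$. First I would set $\mathcal{M}_\varphi(X) = \{\mu : \varphi_*\mu = \mu\}$ inside the probability simplex $\mathcal{M}_1(X) \subset \mathcal{M}(X)$. Identifying $\mathcal{M}_1(X)$ with a weak-$*$ closed subset of the unit ball of $C(X)^*$ by the Riesz representation and using the compactness of $X$, the simplex $\mathcal{M}_1(X)$ is weak-$*$ compact and convex; since $\varphi_*$ is weak-$*$ continuous, being dual to $f \mapsto f\circ\varphi$ on $C(X)$, the fixed-point set $\mathcal{M}_\varphi(X)$ is a weak-$*$ closed, convex, hence compact, subset. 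It is nonempty because the contraction fixed point produced below supplies at least one invariant Dirac measure.

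Next I would invoke the Krein--Milman theorem to write $\mathcal{M}_\varphi(X)$ as the closed convex hull of its extreme points, and identify these with the ergodic measures: a measure $\mu \in \mathcal{M}_\varphi(X)$ fails to be extreme precisely when $\mu = t\mu_1 + (1-t)\mu_2$ with $\mu_1 \ne \mu_2$ in $\mathcal{M}_\varphi(X)$ and $0<t<1$, and restricting to a $\varphi$-invariant Borel set separating $\mu_1$ from $\mu_2$ reproduces exactly the decomposability excluded by the definition of ergodicity recalled before the statement; conversely a non-ergodic $\mu$ has a $\varphi$-invariant $B$ with $0<\mu(B)<1$ and splits through its conditional measures on $B$ and $B^c$. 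Hence the boundary (extreme) points of the closed convex set $\mathcal{M}_\varphi(X)$ are precisely the ergodic invariant measures, and in Choquet form every invariant measure is a barycentre of ergodic ones --- the ``infinite decomposability'' of non-ergodic measures and the ``ergodic boundary'' described in the text.

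Then I would produce the converging net. The replacement of $\R$ by $[0,1)$ through the surjection $T_x : C(X) \to [0,1)$ makes every transformation of $X$ a contraction, so by the contraction-net description of the complete metric space $X$ taken from \cite{HasselblattKatok} there is a net $f_\alpha \to f$ with $f(x) = x$; thus $x$ is a fixed point of the limiting transformation and $\delta_x$ is $\varphi$-invariant, and being an extreme point of the simplex $\mathcal{M}_1(X)$ it is a fortiori extreme, hence ergodic, in $\mathcal{M}_\varphi(X)$. By Proposition~3.1 the generalized space $\mathcal{M}(X)$ is the affine and continuous extension of $\mathcal{U} = \{\delta_x : x \in X\}$, and the coincidence of the zero sets $Z(f)$, $f \in \mathfrak{m}_x$, with null sets furnishes a net $\{\mu_\alpha\}$ of non-ergodic Radon measures adapted to $\{\mu_f : f \in \mathfrak{m}_x\}$ which converges weak-$*$ to $\delta_x$ as $Z[\mathfrak{m}_x]$ converges to $x$, the metric on $X$ induced by the functional $\tau$ of Remark~2.9 being what pins the limit down. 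This net sits in a closed convex subset of $\mathcal{M}_\varphi(X)$ whose boundary point is the ergodic measure $\delta_x$, and that boundary point is the required ergodic limit of the $C(X)$-action defined by $G(1)$ and $\mathfrak{m}_x$. One then checks $G(1)$-compatibility: the two Borel $G(1)$-actions on $\mathfrak{m}_x$, the scaling $\tau_g : f \mapsto gf$ and $g\cdot f = |f|(|f|+|g|)^{-1}$, both preserve $Z(f)$ and hence null sets, so $\mu \mapsto \mu_{gf}$ keeps $\{\mu_\alpha\}$ inside the same closed convex fibre of measure classes over $\delta_x$ without disturbing convergence, and $\mathcal{U}$ is $G(1)$-invariant; thus $\delta_x$ is a $G(1)$-fixed ergodic limit.

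The hard part will be twofold. First, the identification of the extreme points of $\mathcal{M}_\varphi(X)$ with the ergodic measures must be carried out without assuming $\mathcal{M}_\varphi(X)$ metrizable, so the ergodic decomposition has to be used in its non-metrizable Choquet form. Second, and more delicately, one must verify that the net $\{\mu_\alpha\}$ built from the ultrafilter $Z[\mathfrak{m}_x]$ genuinely converges weak-$*$ to $\delta_x$, rather than merely clustering at several invariant measures; this is where the completeness of the $\tau$-induced metric on $X$ together with the convergence of the $z$-ultrafilter to $x$ must be used to force the whole cluster set of $\{\mu_\alpha\}$ to collapse onto the single extreme point $\delta_x$.
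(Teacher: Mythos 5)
Your proposal rests on the same underlying mechanism as the paper's proof: a net of non-ergodic $\varphi$-invariant measures, indexed by the convergence of the $z$-ultrafilter $Z[\mathfrak{m}_x]$ to $x$, converging to the Dirac measure $\delta_x$, which is then recognized as an ergodic limit because ergodic measures are the boundary (extreme) points of the closed convex set of invariant measures. The difference is in how much scaffolding is made explicit. The paper's proof is a narrative: it frames the result as the interplay of an ``average over time'' (iteration of $\varphi$ on $\mathcal{U} \simeq X$, giving $\delta_x \mapsto \delta_{\varphi(x)}$) and an ``average over space'' (the vertical net of invariant measure classes converging to the ergodic limit), and it delegates the boundary-point fact to \cite{EinsWard2011} without constructing either the convex set or the net. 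You instead build the compact convex set $\mathcal{M}(X)_\varphi$ via the Riesz representation and weak-$*$ compactness, invoke Krein--Milman/Choquet to identify extreme points with ergodic measures, and exhibit $\delta_x$ as the limit; this buys precision, and it also buys an honest accounting of the two places where the argument is genuinely incomplete --- the non-metrizable Choquet decomposition, and, more seriously, the verification that the net $\{\mu_\alpha\}$ actually converges to $\delta_x$ rather than merely clustering. Neither point is addressed in the paper's proof either (it simply asserts that the convergence is ``guided by'' the $z$-ultrafilter), so the gaps you flag are gaps in the published argument as well, not defects peculiar to your route. The only ingredient of the paper's proof you do not reproduce is the explicit time-average half of the picture (the iteration $\delta_x \mapsto \delta_{\varphi(x)}$ on $\mathcal{U}$ viewed as a net of transformations), but since the theorem only asks for an ergodic limit of the action, your space-average construction suffices for the stated claim.
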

\begin{proof}
According to \cite{EinsWard2011}, ergodic theorems express a relationship between averages taken along the orbit of a point under the iteration of a measure-preserving map/tansformation $\varphi : X \to X$. The iteration of the transformation on the compact space $X$ isomorphic to the generalized points $\mathcal{U}$ represents passage of time, and its invariance $\varphi : x \mapsto \varphi(x) \simeq \varphi_*(\delta_x) = \delta_{\varphi(x)}$ constitutes a net of transformations which represents an average over time. Restricted to the generalized points $\mathcal{U}$, the dirac measures $\delta_x \mapsto \delta_{\varphi(x)}$ are $\varphi$-invariant measures representing average over states as limits of states.

This invariance accommodates the induced iteration on the generalized space $\mathcal{M}(X)$ tangent at each $\delta_x$, with respect to some invariant measures $\mu$ (or measure classes $\varphi_* : C_f \to C_{f\circ \varphi}$) representing the states. This vertical component of ergodicity represents average over space (averages taken over the classes of measures) which is the net of $\varphi$-invariant non-ergodic measures converging to an ergodic limit guided by the convergence of $z$-ultrafilter $Z(\mathfrak{m}_x)$. We have shown that these two averages are defined by invariance of $\mathcal{U}$ and the measure classes $\{C_f\}$ defined by $\mathfrak{m}_x$ under $G(1)$-actions .
\end{proof}
We will now consider how the commutative algebra (lattice) $C(X)$ action via the maximal ideals $\mathfrak{m}_x$-the $G(1)$-spaces-define the tangent measures to the generalized points $\mathcal{U}$. We recall here that $C(X)$ can be said to act at each point $x \in X$ through the action groupoid $\mathcal{G}:= G(1)\times \mathfrak{m}_x \to \mathfrak{m}_x$.

\section{Tangent Measures and Representation}
We follow the definition of tangent measures given by \cite{O'Neil95} and \cite{Sahlsten2014}. These defined the tangent measures of a Radon measure $\mu$ on $\R^n$. We adapt the definition and result to apply to a complete metric (measure) space $(X,\mu)$. To define the tangent space of a Radon measure $\mu$ on a complete metric space $X$ with metric $d$, we let the sets \[B(x,r) = \{y \in X : d(x,y) \leq r \}; \; \text{and } U(x,r) = \{y \in X : d(x,y) < r\}\] be the closed and open balls respectively, centred at $x \in X$ with radius $r >0$. Subsequently, the support of a measure $\mu \in \mathcal{M}(X)$ is given as $\{x \in X : \mu(B(x,r)) > 0$ for any $r > 0 \}$. With these, a homothetic transformation is defined as follows.
\begin{defx}
Given a point $x \in X$, the mapping $T_{x,r} : X \to X$ defined by $y \mapsto \frac{(y-x)}{r}$ is an affine (homothety) transformation from $B(x,r)$ to $B(x,1)$.
\end{defx}
It follows that $T_{x,r} \in Aut(X,\mu)$. Since any $r \in \R$ could be considered the image $f(x)$ of some continuous function $f \in C(X)$, we let $T_{x,r} = T_{x,f}$. The push-forward of the Radon measure $\mu$ under the map $T_{x,r}$ is given as ${T_{x,r}}_*\mu(A) = \mu(rA + x), A \subset X.$

Also, given a scalar $c>0$, we have $c{T_{x,r}}_*\mu = \top_{x,r,c}(\mu)$. This induces a map \[ c{T_{x,r}}_* = \top_{x,r,c} : \mathcal{M}(X) \to \mathcal{M}(X). \] We can also have $c = g(x)$ for some $g \in G(1)$. When $r = 1$, this gives $T_{x,1}(y) = y-x$. This is used to define tangent measures of a Radon measure as follows.
\begin{defx}
A measure $\nu \in \mathcal{M}(X)\setminus \{0\}$ is a tangent measure to $\mu \in \mathcal{M}(X)$ at $x \in X$ if there exists a net $r_k \searrow 0$ and $c_k > 0$ such that \[ c_k{T_{x,r_k}}_*\mu = \top_{x,r_k,c_k}(\mu) \to \nu, \text{as } k \to \infty. \]
Alternatively, assume $\mu \in \mathcal{M}(X), x \in X$, and $r>0$; define for $A \subset X$
\[\mu_{x,r}(A) := \mu(x + rA) := \mu(\{x + ra : a \in A\}). \] Then, a measure $\nu \in \mathcal{M}(X)$ is a tangent measure to $\mu$ at $x$ if $\nu$ is nonzero and there exist $r_k \searrow 0$ and $c_k > 0$ such that $\underset{k \to \infty}\lim c_k\mu_{x,r_k} = \nu.$
\end{defx}
The set of all tangent measures to $\mu$ at $x$ is denoted by $Tan(\mu,x)$. It is a closed subset of $\mathcal{M}(X) \setminus \{0\}$ with the following properties.\\
(1) $c\nu \in Tan(\mu, x)$ whenever $\nu \in Tan(\mu,x)$ and $c>0$. \\ (2) $\nu_{x,r} \in Tan(\mu,x)$ whenever $\nu \in Tan(\mu,x)$ and $r > 0$ ($G(1)$-invariance). \\ (3) $Tan(\mu,x)$ is a closed set with respect to the space $\mathcal{M}(X)$ of all nonzero, Borel, regular, locally finite measures on $X$.

The definition of tangent measures using the induced map ${T_{x,r}}_* : \mathcal{M}(X) \to \mathcal{M}(X)$ can be related to the $G(1)$-action and $\mathfrak{m}_x$-decomposition of the generalized space $\mathcal{M}(X)$. Considering these two as the action of the algebra $C(X)$ on $\mathcal{M}(X)$, we have the following major theorem.
\begin{thm}
The space of tangent measures $Tan(\delta_x,x)$ of the Dirac measure $\delta_x$ at each $x \in X$ constitutes the orbits of the affine action of the commutative algebra/lattice $C(X)$ on the generalized space $\mathcal{M}(X)$.
\end{thm}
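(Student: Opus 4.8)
The plan is to compute $Tan(\delta_x,x)$ directly from Definition~4.2, obtain a one‑parameter family of scaled Dirac masses, and then recognise that family as the orbit of $\delta_x$ under the affine $C(X)$‑action that the paper assembles from the homothety maps $\top_{x,r,c}$, the multiplicative $G(1)$‑action, and the $\mathfrak{m}_x$‑decomposition of $\mathcal{M}(X)$.

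First I would push the Dirac mass forward under the homothety of Definition~4.1. Since ${T_{x,r}}_*\mu(A)=\mu(x+rA)$, taking $\mu=\delta_x$ gives ${T_{x,r}}_*\delta_x(A)=\delta_x(x+rA)=1$ exactly when $0\in A$ (because $r>0$), so ${T_{x,r}}_*\delta_x=\delta_0$ for every $r>0$, where $\delta_0$ is the Dirac measure at the image of $x$ under the blow‑up, i.e. at the origin of the linear structure that $\tau$ induces on $X$ in Section~2. Hence $\top_{x,r,c}(\delta_x)=c\,\delta_0$ independently of $r$, and for any net $r_k\searrow 0$ with $c_k\to c>0$ one gets $c_k(\delta_x)_{x,r_k}=c_k\delta_0\to c\delta_0\neq 0$, so $c\delta_0\in Tan(\delta_x,x)$; conversely every tangent measure to $\delta_x$ is a limit $\lim_k c_k\delta_0$, which can be nonzero only if $c_k\to c>0$, forcing it to equal $c\delta_0$. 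Therefore $Tan(\delta_x,x)=\{\,c\,\delta_0:c>0\,\}$, a ray; this is a closed convex cone in $\mathcal{M}(X)\setminus\{0\}$ and is manifestly stable under the dilations and homotheties of properties (1)--(3) listed after Definition~4.2.

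Next I would identify this ray with the $C(X)$‑orbit through $\delta_x$. The transformation part $\varphi_*$ merely moves the base point $x\mapsto\varphi(x)$ inside $\mathcal{U}=\{\delta_y:y\in X\}$, so ``the orbit at $x$'' is the one produced by the part of the action fixing $x$, namely the isotropy sub‑groupoid $\mathfrak{m}_x\ltimes G(1)$ of the action groupoid $\mathcal{G}=G(1)\ltimes\mathfrak{m}_x$ trivialised on $X$: here the maximal‑ideal factor supplies the radii $r_k=f_k(x)\searrow 0$ of the blow‑up (the $z$‑ultrafilter $Z[\mathfrak{m}_x]$ converging to $x$ is precisely this shrinking net, as in Section~3), while $g\in G(1)$ supplies the dilation scalar $c=|g(x)|$ in $\top_{x,r,c}$, and $|g(x)|$ sweeps out all of $(0,\infty)$. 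Since ${T_{x,r}}_*\delta_x$ is independent of $r$, the resulting (net‑closed) orbit of $\delta_x$ is exactly $\{c\delta_0:c>0\}$, i.e. $Tan(\delta_x,x)$; letting $x$ range over $X$ then exhibits $\{Tan(\delta_x,x)\}_{x\in X}$ as the orbit decomposition of $\mathcal{M}(X)$ under $\mathcal{G}$, each orbit being one‑dimensional in accordance with the cohomogeneity‑one picture.

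The step I expect to be the main obstacle is making ``the affine action of $C(X)$'' precise enough that the orbit is neither larger nor smaller than $Tan(\delta_x,x)$. The inclusion $\text{orbit}\subseteq Tan(\delta_x,x)$ is immediate once $\top_{x,r,c}\delta_x=c\delta_0$ is established and the $\top$‑maps are declared to be the action; the reverse inclusion needs the rigidity that every blow‑up of a Dirac mass is again a scaled Dirac mass — which holds here for any metric structure on $X$ because $(\delta_x)_{x,r}$ is supported at the single point $0$ regardless of $r$ — together with the surjectivity of $g\mapsto|g(x)|$ onto $(0,\infty)$. One should also verify that the vague (weak‑$*$) closedness asserted in property (3) after Definition~4.2 coincides with the closure that adjoins the ergodic (boundary) measures to the convex set of $\varphi$‑invariant measures, so that the ``ergodic limit'' of Theorem~3.2 and the closed‑cone structure of $Tan(\delta_x,x)$ are the same object; this is where the affineness of $\varphi_*$ and the presentation of $\mathcal{M}(X)$ as an affine continuous extension of $\mathcal{U}$ (Proposition~3.1) enter.
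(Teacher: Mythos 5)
Your direct computation of $Tan(\delta_x,x)$ from Definition~4.2 is the standard one and, taken on its own terms, is sound: a blow-up of a point mass is again a point mass, so $c_k(\delta_x)_{x,r_k}=c_k\delta_0$ and the tangent cone is the single ray $\{c\,\delta_0:c>0\}$. But this is not what the paper proves, and it is incompatible with what the paper needs. The paper's proof never blows up $\delta_x$ itself; it manufactures the scales $r_k=h_k(y)\searrow 0$ from the net $(f_k)\subset\mathfrak{m}_x$ whose zero sets realise the $z$-ultrafilter $Z[\mathfrak{m}_x]\to x$, takes $c_k=|g|_k(y)$ from $G(1)$, and argues that the resulting limits $\lim_k\mu_{x,f_kg_k}$ sweep through \emph{all} the measure classes $[\mu_f]$, concluding $Tan(\mu,x)\simeq\mathcal{M}(X)\setminus\{0\}$ with the Dirac measure appearing only as the ergodic boundary point of each convergent net. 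So where you obtain a one-dimensional ray, the paper obtains the whole punctured generalized space partitioned into $G(1)$-orbits (the measure classes); the word ``orbits'' in the statement is plural for exactly this reason. The later slice analysis (Theorem~7.1) explicitly uses $Tan(\delta_x,x)\simeq\mathcal{M}(X)\setminus\{0\}$ and the claim that the principal orbit has codimension one in $\mathcal{M}(X)$ --- neither of which survives if the tangent cone is a ray.

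The concrete gap in your argument, relative to the theorem as the paper intends it, is in your closing step: the union over $x\in X$ of the rays $\{c\,\delta_y:c>0\}$ is only the cone over $\mathcal{U}=\{\delta_y:y\in X\}$; no non-atomic Radon measure lies in any $Tan(\delta_x,x)$ under your computation, so $\{Tan(\delta_x,x)\}_{x\in X}$ cannot be ``the orbit decomposition of $\mathcal{M}(X)$'' --- the measure classes $[\mu_f]$ of non-ergodic measures, which the paper identifies as the orbits, are entirely missing. You have in effect proved that the theorem, read literally with the classical definition of tangent measure, cannot hold in the form the paper uses it; to recover the paper's conclusion one must follow its (admittedly loosely justified) route in which the blow-up data $(r_k,c_k)$ are generated by $\mathfrak{m}_x$ and $G(1)$ acting on a general Radon measure, not on $\delta_x$. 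Your secondary observations (the reliance on a linear structure for $y\mapsto(y-x)/r$ on a merely compact regular $X$, and the need to reconcile vague closedness with the ergodic-boundary closure of Theorem~3.2) are legitimate obstacles, but they apply equally to the paper's own proof and are not resolved by either argument.
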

\begin{proof}
Assume $f(x) = r > 0$, for some $f \in C(X)$, then the transformation $T_{x,r} = T_{x,f} : X \to X, \; y \mapsto \frac{y-x}{f(x)}$, takes $B(x,f(x))$ to $B(x,1)$. Recall the $G(1)$-action defined on $\mathfrak{m}_x$ by \[(g,f) \mapsto h = \frac{|f|}{|f| + |g|} : X \to [0,1) \] which preserves the zero set $Z(f)$ of each $f \in \mathfrak{m}_x$, and hence its measure class $[\mu_f]$.

The existence of a $z$-ultrafilter for each $\mathfrak{m}_x$ that converges to $x \in X$ implies the existence of a net $(f_k) \subset \mathfrak{m}_x$ such that $Z(f_k) \to x$ in $Z[\mathfrak{m}_x]$. Thus, we define the functions $h_k = \frac{|f_k|}{|f_k|+|g|}$ in such a way that $h_k(y) = r_k \searrow 0$ as $Z(f_k) \to x$ and $k \to \infty$. Then the net $r_k \searrow 0$ is given by $h_k : X \to [0,1)$. Since $X$ is completely regular, it is countable and the images $h_k(y) = r_k$ of the map on $X$ has $0$ as the infimum; such that $r_k \searrow 0$ as $k \to \infty$.

Similarly, the positive number $c > 0$ is taken to be an image $g(y)$ of some $g \in G(1)$. It then follows that $c_k$ result from $G(1)$-action on $\mathfrak{m}_x$ and subsequently on $\mathcal{M}(X)$ as \[g \circ {T_{x,f}}_* = T_{x,fg} : \mathcal{M}(X) \to \mathcal{M}(X).\] Thus, the positive numbers $c_k = |g|_k(y)$ are defined for $y \in Z[\mathfrak{m}_x]$.

By the correspondence between the zero sets $Z(f)$ and the null sets $N(\mu_f)$, the convergence $\underset{k \to \infty}\lim {g_k}\mu_{x,f_k} = \underset{k \to \infty}\lim \mu_{x,f_kg_k} = \nu$ holds across the measure classes, which corresponds to the convergence of a $z$-ultrafilter.

That the limit $\nu$ of the net of measures $\underset{k \to \infty}\lim \mu_{x,f_kg_k}$ is a Dirac measure $\delta_x$ at $x$ follows from the fact that the limit or the boundary point of a sequence of invariant measures in $\mathcal{M}(X)$ is invariant and ergodic (\cite{EinsWard2011}, Theorem 4.4). This again follows from $\mathcal{F} \to x$. So, each measure class has the action of $G(1)$, \[T_g : [\mu_f] \to [\mu_f] \; \text{defined by } \mu_f \mapsto \mu_{fg},\] where $f, fg \in \mathfrak{m}_x$ and $\mu_f \sim \mu_{fg}$; So $G(1) \times [\mu_f] \to [\mu_f]$ is a $G(1)$-action. From these, we conclude that the space of tangent measures $Tan(\mu,x)$ to a Radon measure $\mu \in \mathcal{M}(X)$ comprises of measures of all the measure classes, with each measure class $[\mu_f]$ compact and convex. Thus, $Tan(\mu,x) \simeq \mathcal{M}(X)\setminus \{0\}$.
\end{proof}
The convergence of nets of invariant measures in the tangent space $Tan(\mu,x)$ points to the relationship between the dynamical system defined by the $C(X)$-action and the definition of the tangent space $Tan(\mu,x)$, where the invariance condition is in view of ergodicity of the action as we have seen above. Since the action is defined by the automorphism group $Aut(X,\mu)$, if we let $\mathcal{E} \subset Aut(X,\mu)$ be the ergodic subgroup, it is shown to be a dense $G_\delta$ in $Aut(X,\mu)$.
\begin{thm}\cite{Kechris2010}
The set of ergodic transformations $\mathcal{E}$ is dense $G_\delta$ (intersection of open sets) in $Aut(X,\mu)$.
\end{thm}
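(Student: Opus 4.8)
The plan is to realise $\mathcal{E}$ as a comeager subset of the Polish group $Aut(X,\mu)$ --- where $(X,\mu)$ denotes the standard non-atomic probability space underlying the Radon measure on $X\simeq\mathbb{T}$ --- by showing it is (i) a $G_\delta$ set and (ii) dense; a dense $G_\delta$ in a Polish space is then comeager, which gives the statement. I take on $Aut(X,\mu)$ the \emph{weak topology}, the coarsest group topology making $T\mapsto\mu(T^{\pm 1}A\triangle S^{\pm 1}A)$ continuous for every $A$ in the measure algebra $\mathrm{MALG}(X,\mu)$; comparing against a countable $d_\mu$-dense sequence of sets, with $d_\mu(A,B)=\mu(A\triangle B)$, metrizes it as a Polish space. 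The one analytic input used repeatedly is that for each fixed $n\in\mathbb{N}$ and $A,B\in\mathrm{MALG}(X,\mu)$ the map $T\mapsto\mu(T^{-n}A\cap B)$ is weakly continuous, which follows by telescoping $\mu(T^{-n}A\triangle S^{-n}A)\le\sum_{j=0}^{n-1}\mu\bigl(T^{-1}D_j\triangle S^{-1}D_j\bigr)$ with $D_j=S^{-(n-1-j)}A$, and then $|\mu(T^{-n}A\cap B)-\mu(S^{-n}A\cap B)|\le\mu(T^{-n}A\triangle S^{-n}A)$.

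For (i), I fix a countable $d_\mu$-dense family $\mathcal{D}\subseteq\mathrm{MALG}(X,\mu)$ and establish the characterisation: $T$ is ergodic if and only if for all $A,B\in\mathcal{D}$ and all $k\in\mathbb{N}$ there is $N\in\mathbb{N}$ with
\[
\Bigl|\,\frac1N\sum_{n=0}^{N-1}\mu(T^{-n}A\cap B)-\mu(A)\mu(B)\,\Bigr|<\frac1k .
\]
The forward direction is the von Neumann mean ergodic theorem: for ergodic $T$ the projection onto $U_T$-invariant vectors is $f\mapsto\int f\,d\mu$, so the Ces\`aro sums $\frac1N\sum_{n<N}\mu(T^{-n}A\cap B)$ converge to $\mu(A)\mu(B)$ and the displayed inequality holds for all large $N$. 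For the converse: since $\frac1N\sum_{n<N}\mu(T^{-n}A\cap B)$ is $1$-Lipschitz in $A$ and in $B$ uniformly in $N$, the condition over $\mathcal{D}$ passes to all $A,B\in\mathrm{MALG}(X,\mu)$; if $T$ were not ergodic there would be a $T$-invariant $C$ with $0<\mu(C)<1$, and taking $A=B=C$ makes every Ces\`aro sum equal $\mu(C)$, forcing $\mu(C)(1-\mu(C))=0$, a contradiction. Granting this, each set $\{T:|\frac1N\sum_{n<N}\mu(T^{-n}A\cap B)-\mu(A)\mu(B)|<\frac1k\}$ (with $A,B\in\mathcal{D}$ and $k,N$ fixed) is open by the continuity above, so
\[
\mathcal{E}=\bigcap_{A,B\in\mathcal{D}}\ \bigcap_{k\in\mathbb{N}}\ \bigcup_{N\in\mathbb{N}}\ \Bigl\{\,T\in Aut(X,\mu):\ \Bigl|\frac1N\sum_{n=0}^{N-1}\mu(T^{-n}A\cap B)-\mu(A)\mu(B)\Bigr|<\frac1k\,\Bigr\}
\]
is a countable intersection of open sets, that is, a $G_\delta$.

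For (ii), I fix one ergodic $T_0\in Aut(X,\mu)$ --- available since a standard non-atomic probability space carries ergodic transformations, e.g.\ an irrational rotation under $X\simeq\mathbb{T}$, or intrinsically the transformation carrying the ergodic $C(X)$-action of Theorem~3.2 --- and invoke Halmos's conjugacy lemma: the conjugacy class $\{hT_0h^{-1}:h\in Aut(X,\mu)\}$ of any aperiodic transformation is weakly dense in $Aut(X,\mu)$. Its proof is where Rokhlin's lemma enters: given $S$, finitely many sets $A_1,\dots,A_m$, and $\varepsilon>0$, one builds a tall Rokhlin tower $\{F,T_0F,\dots,T_0^{N-1}F\}$ of measure $>1-\varepsilon$ refining the $A_\ell$ on its levels, represents $S$ on these sets up to $\varepsilon$ by a cyclic tower map, and conjugates it into the prescribed neighbourhood of $S$. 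Since every ergodic transformation is aperiodic and conjugation preserves ergodicity, this dense conjugacy class lies in $\mathcal{E}$, so $\mathcal{E}$ is dense; combining with (i), $\mathcal{E}$ is a dense $G_\delta$ in $Aut(X,\mu)$.

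I expect the density statement (ii) to be the main obstacle: it rests on the full force of Rokhlin's lemma and the tower-approximation machinery behind the conjugacy lemma, and the care needed there is to make the cyclic approximants converge in the \emph{weak} (not the finer uniform) topology and to carry out the conjugacy without reintroducing periodicity. The $G_\delta$ part is then routine, but only provided one averages the correlations before testing --- the bare sequence $\mu(T^{-n}A\cap B)$ need \emph{not} admit $\mu(A)\mu(B)$ as a limit point even for ergodic $T$, e.g.\ for a skew product $T(i,y)=(i+1,Ry)$ over an ergodic rotation --- and provided the converse is read off from a genuine invariant set rather than from approximately invariant ones.
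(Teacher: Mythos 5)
The paper does not actually prove this statement: it is imported verbatim from \cite{Kechris2010} and used as a black box, so there is no internal argument to compare yours against. What you have written is the standard Halmos--Kechris proof, and it is essentially correct. Your $G_\delta$ part is sound: writing ergodicity as ``$\mu(A)\mu(B)$ is a limit point of the Ces\`aro averages $\frac1N\sum_{n<N}\mu(T^{-n}A\cap B)$ over a countable dense family'' is legitimate precisely because the mean ergodic theorem guarantees those averages always converge, so a limit point is the limit; your Lipschitz transfer from $\mathcal{D}$ to all of $\mathrm{MALG}(X,\mu)$ and the invariant-set contradiction for the converse are both correct, as is the telescoping argument for weak continuity of $T\mapsto\mu(T^{-n}A\cap B)$. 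The density part is only sketched, but the sketch names the right machinery (Rokhlin's lemma, tower approximation, Halmos's conjugacy lemma in the weak topology), and that is where the real content of the theorem lives. One caveat worth making explicit: the theorem as you prove it requires $(X,\mu)$ to be (isomorphic to) a standard non-atomic Lebesgue probability space --- you assume this via the paper's identification $X\simeq\mathbb{T}$, but the paper itself never verifies non-atomicity of its Radon measure, so your proof is, if anything, more careful about the hypotheses than the source that cites the result.
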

Subsequently, to use the dynamical system determined by the measurable/continuous measure-preserving transformations $G(1) \times \mathfrak{m}_x \to \mathfrak{m}_x$ to study the ergodic action of the algebra $C(X)$, we have to conceive it as an action groupoid acting on each $x \in X$. Then the open and dense $t$-fibres will represent the ergodic subgroup of $Aut(X,\mu)$ in the commutative algebra $C(X)$ and on the generalized space of Radon measures $\mathcal{M}(X)$ as virtual groups.

A virtual group arises when a $G$-space has only invariant measure class, which is a general case of invariant measure. Assuming $G$ is a Borel group, and $C$ an invariant measure class in the standard $G$-space $S$ with non-transitive $G$-action; then $B$ and $S-B$ are invariant subsets of $S$. Thus, $S = B\oplus (S-B)$ is the direct sum of $G$-spaces. If $B$ and $S-B$ are not of measure zero, we obtain a non trivial invariant measure class in $B$ and $S-B$ by taking any $\mu \in C$ and then taking the class of its restriction to $B$ and $S-B$ respectively. This will realize $(S,C)$ as the direct sum of two invariant subsystems in such a way that: \\ (1) Every measurable invariant subset of $S$ is either of measure zero or the complement of a set of measure zero. \\ (2) Every invariant subset of $S$ on which $G$ acts transitively is of measure zero
\begin{defx}
An action of $G$ on $S$ is ergodic (or metrically transitive) with respect to a measure class $C$ if (1) holds; namely, every measurable invariant subset of $S$ is either of measure zero or the complement of a set of measure zero. When (1) and (2) hold, the action is \emph{strictly ergodic}.
\end{defx}
Since the $G(1)$-action on the maximal ideals $\mathfrak{m}_x$ at each point of $X$ preserves the zero sets, it is ergodic. Likewise the induced action on $\mathcal{M}(X)$ whose orbits are measure classes; which means that the $G(1)$-action on the $\mathfrak{m}_x$-decomposed $\mathcal{M}(X)$ is also ergodic. We now consider the virtual (sub)group arising from the ergodic action of the algebra $C(X)$ on the generalized space $\mathcal{M}(X)$, namely, the measure groupoid.

\section{The Measure Groupoid and its Action}
The use of zero sets and null sets in the characterization of action of the commutative algebra $C(X)$ evidently agrees with Mackey's definition of ergodic action, and his understanding of a measure class $C$ or $[\mu]$ of a typical Radon measure $\mu$ as a generalized subset agrees with our presentation of a measure class as an orbit of $G(1)$-action. According to this understanding, the measure class $[\mu]$ of $\mu$ is the set of all measures in $\mathcal{M}(X)$ possessing the same null sets in common with $\mu$. Each member is related to every other measure in the class for the class (and each member) is defined by the complement of its measurable set. For a countable set, its complement that is of $\mu$-measure zero, is the largest null set which completely determines the measure class $[\mu]$. Cf. \cite{Mackey57}).

The Dirac measures define multiplicative functionals on $C(X)$ by the isomorphism $X \cong \mathcal{U}$. The kernel of the functional is the maximal ideal $\mathfrak{m}_x$. That is, $\ker \delta_x = \{f \in C(X) : f(x) = 0 \} = \mathfrak{m}_x $. Therefore, using the isomorphism, we see that the set $\mathscr{C}$ of measure classes arising from the $\mathfrak{m}_x$-decomposition of a (non-ergodic) Radon measure $\mu$ on $X$ given as \[f : \mathcal{M}(X) \to \mathscr{C}, \mu \mapsto \mu_f \; \text{for each } f \in \mathfrak{m}_x;\] is trivialized on the generalized points $U = \{\delta_x : x \in X\}$. The convergence of a $z$-ultrafilter $Z(\mathfrak{m}_x)$ associated to $\mathfrak{m}_x$ to $x \in X$ gives the tangent space of measures $T_{\delta_x}\mathcal{M}(X)$ for each point of the generalized set $\mathcal{U} = \{\delta_x : x \in X \}$.

Subsequent on this convergence of a $z$-ultrafilter $Z[\mathfrak{m}_x]$, which assures the closure of a convex set $\mathcal{M}(X)_\varphi$ of $\varphi$-invariant measures in $\mathcal{M}(X)$ having $\delta_x \in \mathcal{U}$ as extreme/limit points, we define a $\varphi$-dynamism. Thus, a continuous transformation $\varphi$ on $X$, by its action on the Radon measures $\mu$ on $X$, induces another continuous transformation on the generalized space $\varphi_* : \mathcal{M}(X) \to \mathcal{M}(X)$, given as $\varphi_*(\mu)(A) = \mu(\varphi^{-1}(A))$ for any Borel subset $A \subseteq X$. This induced map translate the tangent fibre/space. We will use this fact to prove the following theorem.
\begin{thm}\cite{EinsWard2011}
Let $X$ be a compact metric space and $\varphi : X \to X$ a continuous map. Then for any $\mu \in \mathcal{M}(X)_\varphi$ there is a unique probability measure $\delta$ defined on the Borel subsets of the compact metric space $\mathcal{M}(X)_\varphi$ with the properties that \\
(1) $\delta(\mathcal{E}(X)_\varphi) = 1$, where $\mathcal{E}(X)_\varphi = \mathcal{U}$ are extreme points of $\mathcal{M}(X)_\varphi$ \\
(2) $\displaystyle \int_X fd\mu = \int_{\mathcal{U}} \left(\int_X fd\nu \right)d\delta(\nu)$
\end{thm}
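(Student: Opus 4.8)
The plan is to realise $\mathcal{M}(X)_\varphi$, the set of $\varphi$-invariant Borel probability measures on $X$, as a metrizable compact convex subset of the locally convex space $C(X)^*$ with its weak-$*$ topology, and then to deduce (1) and (2) from Choquet's representation theorem together with the simplex property of $\mathcal{M}(X)_\varphi$. First I would record the three structural facts that place us in the Choquet setting: $\mathcal{M}(X)_\varphi$ is nonempty --- by a Krylov--Bogolyubov averaging argument, or, in the language of the present paper, by the net-completeness of $\varphi$-invariant measures that the convergence of the $z$-ultrafilter $Z[\mathfrak{m}_x]$ forces, as exploited in Theorem 3.2; it is convex, since an affine combination of $\varphi$-invariant measures is again $\varphi$-invariant; and it is weak-$*$ closed inside the weak-$*$ compact set of probability measures on $X$, hence itself weak-$*$ compact. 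Because $X$ is a compact metric space, $C(X)$ is separable, so the weak-$*$ topology is metrizable on bounded subsets of $C(X)^*$; thus $\mathcal{M}(X)_\varphi$ is a compact metrizable convex set, which is exactly the hypothesis of the metrizable Choquet theorem.

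Next I would identify its extreme points. By Definition 4.5 and the discussion preceding Theorem 3.2, a $\varphi$-invariant measure is extreme in $\mathcal{M}(X)_\varphi$ precisely when it is $\varphi$-indecomposable, i.e. ergodic; and under the isomorphism $X \cong \mathcal{U}$ fixed in Section 3 the ergodic invariant measures are exactly the generalized points $\mathcal{E}(X)_\varphi = \mathcal{U} = \{\delta_x : x \in X\}$. Since the ambient set is metrizable, $\mathcal{U} = \mathrm{ext}(\mathcal{M}(X)_\varphi)$ is a $G_\delta$, hence Borel, so the requirement $\delta(\mathcal{U}) = 1$ in (1) is meaningful. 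Choquet's existence theorem then supplies, for each $\mu \in \mathcal{M}(X)_\varphi$, a probability measure $\delta$ on the Borel sets of $\mathcal{M}(X)_\varphi$ with $\delta(\mathcal{U}) = 1$ whose barycenter is $\mu$; evaluating the barycenter identity against each $f \in C(X)$ yields $\int_X f\,d\mu = \int_{\mathcal{U}} \bigl( \int_X f\,d\nu \bigr)\,d\delta(\nu)$, which is (2). Continuous test functions suffice because $C(X)$ separates the points of $\mathcal{M}(X)$; a bounded-Borel version of (2), if wanted, then follows by a monotone-class argument.

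It remains to prove uniqueness, which is where I expect the real work to lie: I would show that $\mathcal{M}(X)_\varphi$ is a Choquet simplex. The crucial lemma is that two distinct ergodic $\varphi$-invariant measures $\nu_1 \neq \nu_2$ are mutually singular --- for if they were not, a Lebesgue--Radon--Nikodym decomposition of $\nu_1$ with respect to $\nu_1 + \nu_2$ would produce a $\varphi$-invariant Borel set $B$ with $0 < \nu_i(B) < 1$ for some $i$, contradicting condition (1) of ergodicity in Definition 4.5; here the zero-set/null-set correspondence developed in Section 5 is precisely what guarantees that the sets involved are genuinely $\varphi$-invariant Borel sets. Mutual singularity of all the extreme points forces the representing measure on $\mathcal{U}$ to be unique, by the standard fact that a metrizable compact convex set with pairwise-singular extreme points is a simplex. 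An equivalent, more hands-on route is to construct $\delta$ directly as the push-forward $\delta = (x \mapsto \delta_x)_*\mu$ under the ergodic-component map coming from the pointwise ergodic theorem --- the system of conditional measures of $\mu$ relative to the $\sigma$-algebra of $\varphi$-invariant sets --- verify (1) and (2) for it, and then show that any $\delta'$ satisfying (1) and (2) agrees with it by testing against bounded $\varphi$-invariant functions; this also recovers the tangent-measure picture of Theorem 4.3, in which $Tan(\delta_x,x) \simeq \mathcal{M}(X)\setminus\{0\}$ assembles the measure classes feeding the convergent net at $\delta_x$. Either way, the mutual-singularity lemma, powered by ergodicity and the null-set correspondence, is the load-bearing step.
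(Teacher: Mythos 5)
Your proposal is correct in outline---it is essentially the standard Choquet-theoretic proof of the ergodic decomposition theorem---but it takes a genuinely different route from the paper. The paper never invokes Choquet's theorem or the simplex structure: its proof reinterprets the two integrations in (2) inside its tangent-bundle picture, reading the inner integral as integration over the fibre $Tan(\delta_x,x)$ of $\varphi$-invariant measure classes and the outer integral as integration over the base $\mathcal{U}$ of ergodic measures, the key step being that $\varphi_*$ preserves measure classes and hence commutes with the target map $t$ of the bundle. That argument paints a structural picture but never actually constructs $\delta$ and never addresses uniqueness, which is the substantive content of the theorem; your proposal supplies exactly the missing analysis (Krylov--Bogolyubov for nonemptiness, weak-$*$ compactness and metrizability of $\mathcal{M}(X)_\varphi$, Choquet existence for (1) and (2), and mutual singularity of distinct ergodic measures for uniqueness). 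Your ``hands-on'' alternative via conditional measures on the $\sigma$-algebra of $\varphi$-invariant sets is in fact the route taken in the cited source \cite{EinsWard2011}, so your sketch is closer to the original reference than to this paper's argument; the mutual-singularity lemma is, as you say, the load-bearing step, though the phrase ``a metrizable compact convex set with pairwise-singular extreme points is a simplex'' should be replaced by the explicit Radon--Nikodym argument showing any two representing measures supported on the ergodic measures agree. One caveat you inherit from the statement rather than introduce: the identification $\mathcal{E}(X)_\varphi = \mathcal{U} = \{\delta_x : x \in X\}$ holds only when every ergodic $\varphi$-invariant measure is a point mass; for a general continuous $\varphi$ the extreme points of $\mathcal{M}(X)_\varphi$ are the ergodic measures, not the Dirac measures, so (1) should be read as $\delta\bigl(\mathrm{ext}\,\mathcal{M}(X)_\varphi\bigr) = 1$ with the identification with $\mathcal{U}$ treated as an extra hypothesis particular to the paper's setting.
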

\begin{proof}
We have seen that every continuous transformation $\varphi : X \to X$ which is ergodic must be a homothety of the form $T_{x,f(x)} : X \to X, y \mapsto \frac{y-x}{f(x)}$, by the characterizations of ergodic transformations of the compact metric space $X$.

Now, to say that the compact convex set of measures $\mathcal{M}(X)_\varphi$ is $\varphi$-invariant is to say that the class of a Radon measure $\mu \in \mathcal{M}(X)_\varphi$ is preserved by the transformation $\varphi$; that is, $\varphi_*(\mu) \sim \mu$. Hence, the transformation preserves the measure classes; $\varphi_* : [\mu_f] \to [\mu_f]$. Relating this to the tangent space of measures $Tan(\delta_x,x)$, we see that the transformation $\varphi : X \to X$ with the induced map \[\varphi_* : Tan(\delta_x,x) \to Tan(\delta_{\varphi(x)},\varphi(x)) \] defines a left translation on the tangent bundle over $\mathcal{U}$ of measures on $X$. Hence the following diagram commutes
\begin{center}
\begin{tikzpicture}
\draw[->] (0.3,0) -- (4.5,0) node[right] {$\mathcal{U}$};
\draw[-] (2.5,0) node[above] {$\varphi$};
\draw[<-] (5,0.3) -- (5,1.5) node[above] {$Tan(\delta_{\varphi(x)},\varphi(x))$};
\draw[-] (5,0.9) node[right] {$t$};
\draw[-] (0,1.5) node[above] {$Tan(\delta_x,x)$};
\draw[->] (0,1.5) -- (0,0.3) node[below] {$\mathcal{U}$};
\draw[-] (0,0.9) node[left] {$t$};
\draw[->] (1,1.7) -- (3.4,1.7);
\draw[-] (1.7,1.7) node[above] {$\varphi_*$};
\end{tikzpicture}
\end{center}
This is a tangent bundle structure over $\mathcal{U}$, where the tangent space of measures $Tan(\delta_x,x)$ varies continuously on $\mathcal{U}$ with the continuous transformation $\varphi$. The first (inner) integration is over the invariant measure classes (orbits) making up the fibre, and the second (outer) is over the ergodic measures (base space).
\end{proof}
From the foregoing, and our definition of the homothetic transformations $\varphi = T_{x,r}$ using the $G(1)$ and $\mathfrak{m}_x$, it follows that the compact convex set $\mathcal{M}(X)_\varphi$ is composed of measures translated in a parallel or transverse manner, which preserve their equivalence classes, across the tangent spaces. They are therefore conjugate measures since $\mu \in \mathcal{M}(X)_\varphi \; \implies \; \varphi_*([\mu]) = [\mu \circ \varphi] = [\mu]$. Hence, \[\varphi_*(\mu_f)(f) = \int_Xf d(\mu_f \circ \varphi)  = \int_X fd\mu_f.\]

This represents the interesting and difficult aspect of this dynamical system; which, according to \cite{EinsWard2011}, is to understand the orbit of a point $x \in X$ under the iteration of the ergodic and continuous transformations $\varphi$. The orbit is a \emph{section} of the equivalence classes of measures constituting the tangent bundle $T(\mathcal{M}(X))$. We will now consider the virtual group as a measure groupoid to throw light on the tangent bundle structure as described above.

Our characterization of the sub $\sigma$-algebra generated by the complements of the zero sets $Z(\mathfrak{m}_x)$ above coincides with Peter Hahn's description in \cite{Hahn78}.  Thus, using \cite{Hahn78}, Theorem 2.1,  we summarize the conclusion above as follows. \\ (i) every measure class $[\mu_f]$ in $\mathcal{M}(X)$ contains a probability $\lambda : \lambda(X) = 1$;\\ (ii) the map $p : U_f \mapsto f$ is a Borel surjection;\\ (iii) if $\mu \sim \nu$ on $X$, then $\bar{\mu} \sim \bar{\nu}$ are their push-forwards on $\mathfrak{m}_x$, where $\bar{\mu} = p_*\mu, \bar{\nu} = p_*\nu$;\\ (iv) their R-N derivative $j = \frac{d\nu}{d\mu}$ is a positive Borel function on $Y \subset X$;\\ (v) the $\mathfrak{m}_x$-decomposition $\nu \mapsto \nu_f$ of a Radon measure on $X$, gives a surjective map onto the equivalence classes (orbits of $G(1)$-action) $\pi : \mathcal{M}(X) \to \mathscr{C}, \nu \mapsto \nu_f$ such that the following are satisfied: \\
(1) If $h \geq 0$ is  Borel on $U_f$, then $\displaystyle f \mapsto \int hd\nu_f$ is its extension as a real-valued Borel function on $X$ (implying the closure of $U_f$ is $C(X)$-embedded). \\
(2) $\nu_f(Z(f)) = \nu_f(X-p^{-1}(\{f\})) = 0, \; \forall \; f \in \mathfrak{m}_x$ ($\implies \; Z(f) = \nu_f$-null set).\\
(3) If $h \geq 0$ is Borel on $X$, then $\displaystyle \int hd\nu = \int \left(\int hd\nu_f\right)d\bar{\mu}(f)$.
\begin{rem}
The conclusion (iii) above implies that the R-N derivative is a decomposition invariant property of equivalent measures; thus $j = \frac{d\nu}{d\mu} = \frac{d\nu_f}{d\mu_f} = \frac{d\nu_{fg}}{d\mu_{fg}}$ a.e. If we define this derivative as operator $\partial_\mu : \mathcal{M}(X) \to L^1(X,\mu)$, then we see that the operator commutes with the $C(X)$-actions. For given $f \in \mathfrak{m}_x, g \in G(1)$, we have \[(g \circ f)\partial_\mu(\nu) = g * \partial_{\mu_f}(\nu_f) = \partial_{\mu_{fg}}(\nu_{fg}).\]
This commutation clearly expresses the dynamical symmetry of the system which we will also represent using the groupoid action. We have the following definitions from \cite{Hahn78}.
\end{rem}
\begin{defx}
Given a measure $\mu$ on a Borel space or set $A$, the inverse of the measure $\mu^{-1}$ is defined on the set as $\mu^{-1}(A) = \mu(A^{-1})$.
\end{defx}
\begin{defx}
A measure $\mu$ on a groupoid $\mathcal{G}$ is symmetric if $\mu = \mu^{-1}$. A class of measure $\mathcal{C}$ is symmetric if it contains a symmetric measure. Thus, if $\mu \in \mathcal{C}$, then $\mu = \mu^{-1}$ implies $\mathcal{C}$ is symmetric.
\end{defx}
\begin{defx}
Let $\mathcal{C}$ be a symmetric measure class on an analytic groupoid $\mathcal{G}$. Let $\lambda \in \mathcal{C}$ be a probability measure with $t$-decomposition \[ \lambda = \int \lambda^ud\bar{\lambda}(u) \; \text{over } \mathcal{G}^o. \] Then $\lambda$ is (left) \emph{quasi-invariant} if there is a $\bar{\lambda}$-conull Borel set $U \subset \mathcal{G}^o$ such that if $t(g) \in U$ and $s(g) \in U$, then $g \cdot \lambda^{s(g)} \sim \lambda^{t(g)}$.
\end{defx}
Recall the presentation of the product space $(G(1) \times \mathfrak{m}_x, \nu \times \mu_f)$ as a standard Borel space. The family of measures $(\nu \times \mu_f)_{f \in \mathfrak{m}_x}$ results from the decomposition of a Radon measure $\mu$ on $X$ by the actions of the algebra/lattice $C(X)$, which we have described above. Thus, we have \[ (G(1) \times \mathfrak{m}_x) : \mu \to (\nu \times \mu_f)_{f \in \mathfrak{m}_x}; \] where $[\mu_f]_{f\in \mathfrak{m}_x} = \mathscr{C}$ is the collection of the measure classes of $\mathcal{M}(X)$, and $\nu$ is ergodic and a Haar measure on $G(1)$. We give another main theorem of the work.
\begin{thm}
The product space $(G(1) \times \mathfrak{m}_x, \nu \times \mu_f)$ is a standard Borel space and a measure groupoid $\mathcal{G} = \mathfrak{m}_x \rtimes G(1)$. The Haar system of measures is the family $(\nu \times \mu_f)_{f \in \mathfrak{m}_x}$ resulting from the $\mathfrak{m}_x$-decomposition of a Radon measure $\mu$ on $X$ and $G(1)$-actions at each $x \in X$.
\end{thm}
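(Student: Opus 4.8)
\section*{Proof proposal}

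The plan is to recognise $(G(1)\times\mathfrak{m}_x,\ \nu\times\mu_f)$ as the transformation (action) groupoid of the multiplicative $G(1)$-action $\tau_g:f\mapsto gf$ on $\mathfrak{m}_x$ introduced in Section~2, carrying the measure class assembled from a Haar measure $\nu$ on the Borel group $G(1)$ and the $\mathfrak{m}_x$-decomposition $\mu\mapsto\mu_f$ of a Radon measure, and then to check the three data that, by Theorem~2.1 of Hahn \cite{Hahn78}, make a Borel groupoid into a measure groupoid: an analytic (standard) Borel groupoid structure, a symmetric quasi-invariant measure class in the sense of Definitions~5.4 and~5.5, and a left Haar system over the unit space. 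I would carry out the verification in that order.

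First I would fix the groupoid structure on $\mathcal{G}=G(1)\times\mathfrak{m}_x$: unit space $\mathcal{G}^o=\{(1,f):f\in\mathfrak{m}_x\}\cong\mathfrak{m}_x$, source $s(g,f)=f$, target $t(g,f)=gf=\tau_g(f)$, product $(g_2,g_1f)(g_1,f)=(g_2g_1,f)$ on composable pairs, and inverse $(g,f)^{-1}=(g^{-1},gf)$. That $\tau_g$ maps $\mathfrak{m}_x$ into itself --- it preserves zero sets, hence membership in $\mathfrak{m}_x$ --- was established in Section~2, so associativity, the unit laws and the inverse laws hold exactly as for the transformation groupoid $G(1)\ltimes M$ of Section~1, and this step is routine. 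For the Borel structure, $G(1)$ was shown in Sections~2 and~3 to be a Borel group, and $\mathfrak{m}_x\subseteq C(X)$ carries the Borel structure generated by the point-separating family $\mathfrak{A}=\{U_f\}$ (Proposition~2.3, Corollary~2.4), which under the identification $X\simeq\mathbb{T}$ of Section~2.1 and Mackey's criterion in \cite{Mackey57} is countably generated and countably separated, hence standard; therefore $G(1)\times\mathfrak{m}_x$ is a standard Borel space, while $s$, $t$, the multiplication and the inversion are restrictions of the jointly continuous ring operations of $C(X)$, so $\mathcal{G}$ is an analytic Borel groupoid.

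Next I would produce the measure class and its symmetry and quasi-invariance. Take $\nu$ a Haar (ergodic) measure on $G(1)$ and $\bar\mu$ the image on $\mathcal{G}^o\cong\mathfrak{m}_x$ of the $\mathfrak{m}_x$-decomposition $\mu\mapsto\mu_f$ of a Radon measure $\mu$ on $X$, and set $\mathcal{C}=[\nu\times\mu_f]$. By Theorem~4.3 and the $G(1)$-invariance of $Tan(\mu,x)$, each $\tau_g$ permutes the measure classes $[\mu_f]$ and preserves null sets, so $\bar\mu$ is $G(1)$-quasi-invariant; since Haar measure satisfies $\nu\sim\nu^{-1}$, the class $\mathcal{C}$ contains a symmetric measure in the sense of Definition~5.4, and applying Definition~5.5 to the $t$-decomposition of $\nu\times\mu$ yields quasi-invariance of $\mathcal{C}$ --- this is precisely the situation of Theorem~2.1 of \cite{Hahn78} together with the conclusions (i)--(v) recorded after it. Finally I would exhibit the Haar system: for each unit $f$ let $\lambda^f$ be the copy of $\nu$ carried by the $t$-fibre $t^{-1}(f)\cong G(1)$; the assignment $f\mapsto\lambda^f$ is Borel (essentially constant in $f$), and left-invariance $\gamma\cdot\lambda^{s(\gamma)}=\lambda^{t(\gamma)}$ for $\gamma=(g,f)$ reduces exactly to left-invariance of $\nu$ under translation by $g$ in $G(1)$. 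Integrating, $\int\lambda^f\,d\bar\mu(f)=\nu\times\mu$, whose decomposition over $\mathcal{G}^o$ is the family $(\nu\times\mu_f)_{f\in\mathfrak{m}_x}$; hence this family is the Haar system attached to the quasi-invariant measure $\bar\mu$, and by Hahn's definition $(\mathcal{G},\mathcal{C})=(\mathfrak{m}_x\rtimes G(1),[\nu\times\mu_f])$ is a measure groupoid.

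The main obstacle I anticipate is the standard-Borel claim together with the compatibility of $\nu$ with the decomposition. One must genuinely verify that the $\sigma$-algebra generated on $\mathfrak{m}_x$ (and on $G(1)$) by $\{U_f\}$ is countably separated and analytic rather than merely point-separating --- this is where the reduction $X\simeq\mathbb{T}$ and the $C(X)$-embedding results of Section~2.1 do the real work --- and that the Haar measure $\nu$ on the Borel group $G(1)$ is compatible with the $\mathfrak{m}_x$-decomposition so that each $\nu\times\mu_f$ is quasi-invariant and the class $[\nu\times\mu_f]$ is symmetric; the left-invariance of the Haar system and the groupoid axioms are then formal.
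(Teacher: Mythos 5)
Your proposal is correct and follows essentially the same route as the paper: both realise $(G(1)\times\mathfrak{m}_x,\nu\times\mu_f)$ as the transformation groupoid of the $G(1)$-action on $\mathfrak{m}_x$, verify the analytic Borel structure, and obtain the Haar system from the fibre measures built out of $\nu$ and the $\mathfrak{m}_x$-decomposition within Hahn's framework of symmetric quasi-invariant measure classes. The only notable difference is one of precision in your favour: you derive symmetry from $\nu\sim\nu^{-1}$ and left-invariance of the system directly from Haar invariance of $\nu$ on each $t$-fibre, whereas the paper argues translation-invariance via the equivalence $(\mu_f\times\nu)\sim(\mu_{fg_1}\times\nu)$ and asserts symmetry by normalisation on conull sets.
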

\begin{proof}
We define the measure groupoid $\mathcal{G} = \mathfrak{m}_x \rtimes G(1) \rightrightarrows \mathfrak{m}_x$, where the arrows \[\mathcal{G}^{(2)} = \{((f_1,g_1),(f_2,g_2)) \in (\mathfrak{m}_x \times G(1)) : f_2 = f_1g_1\}\] are composable; and the product is defined as $(f_1,g_1)(f_2,g_2) = (f_1,g_1g_2)$ is continuous. The inverse is given as $(f,g)^{-1} = (fg,g^{-1})$ is also continuous. The space of units or objects is $\mathcal{G}^o = \mathfrak{m}_x \times \{e\}$, where $e \in G(1)$ is the identity element. The space of objects is therefore identified with the maximal ideals $\mathfrak{m}_x$ of $C(X)$. The groupoid is therefore trivialized on $X$ by the correspondence $\mathfrak{m}_x \leftrightarrow x$. The source and target maps defined as \[s(f,g) = f, \; t(f,g) = fg,\] are continuous surjective maps (local homeomorphisms). Thus, $\mathcal{G} = \mathfrak{m}_x \rtimes G(1)$ is an (analytic) \emph{etale groupoid} or a topological groupoid since $G(1)$ is locally compact group.

To define the groupoid convolution algebra, we define the one-cocycle by the homomorphism $\rho : \mathcal{G} \to \mathbb{T} $ given by $\rho(f,g_1g_2) = \rho(f,g_1)\rho(fg_1, g_2)$, where \[\rho(f,g_1) = \frac{|g_1|}{|f| + |g_1|}, \rho(fg_1,g_2) = \frac{|g_2|}{|fg_1| + |g_2|}.\]
The groupoid $\mathcal{G} \rightrightarrows \mathcal{G}^o$ as defined has a Borel structure; and the multiplicative set $\mathcal{G}^{(2)} \subset \mathcal{G} \times \mathcal{G}$ is a Borel set, and both the product and inverse maps are Borel functions. The source and target maps are also Borel functions. In fact, the functions are also analytic. Hence, the groupoid is analytic.

Now, given a fixed object $(f,e) \in \mathcal{G}^o$, the $t$-fibre over the object $t^{-1}(f,e) = \{(fg^{-1},g)\}_{g \in G(1)}$ supports the probability measure $\mu_f \times \nu$. Assuming that $s(f,g_1) = (f,e)$; then for the subalgebra $\mathfrak{A}$ generated by $(X - Z[\mathfrak{m}_x]$, the map \[A \mapsto \int \chi_A((f,g_1)(fg_1,g_2))d(\mu_f \times \nu)(fg_1,g_2)\] defines a probability measure $(f,g_1)\cdot (\mu_{f} \times \nu) = \mu_{fg_1} \times \nu$ on the $t$-fibre $t^{-1}(\{t(f,g_1)\}) = t^{-1}(\{(fg_1,e)\})$, where the term $(f,g_1)(fg_1,g_2)$ is our one-cocycle. With this formulation, the family of measures $\{(\mu_f \times \nu)\}_{f \in \mathfrak{m}_x}$ has its support on the $t$-fibres $t^{-1}(\{s(f,g_1)\})$, which makes the product $(f,g_1)(fg_1,g_2)$ defined for $(\mu_f \times \nu)$-almost for all $(fg_1,g_2)$.

Next, we show that the family of measures $(\mu_f \times \nu)_{f \in \mathfrak{m}_x}$ is symmetric and translation invariant. By definition, the Dirac measure $\delta_x$ is a probability measure since $\delta_x(X) = 1$. Also, every measure class $C$ is shown to be symmetric, since a Borel measure can be normalized on its conull set. Thus, every class of measures $\mathcal{C}$ in $\mathcal{M}(X)$ is symmetric. Given a measure $(\mu_f \times \nu)^{(f,e)}$ supported by the $t$-fibre $t^{-1}(f,e)$, it is translated by $(f,g_1)$ as follows \[(f,g_1)\cdot (\mu_f \times \nu) = (\mu_{fg_1} \times \nu)^{(fg_1,e)}; \] where $s(f,g_1) = (f,e); t(f,g_1) = (fg_1,e) = s(fg_1,g_2)$. Thus, the family is translation-invariant since $(\mu_f \times \nu) \sim (\mu_{fg_1} \times \nu)$.

From the definition of the measure classes, a probability measure $\mu \in C$ is both $s,t : \mathcal{G} \to \mathcal{G}^o$ decomposable; which is seen in the fact that \[\mu = \int \mu_fdt_*\mu((f,e)) = \int \mu_fd\mu(t(fg,g^{-1})).\] This implies that $(f,e)  \in U \subset \mathcal{G}^o$-a $\delta_x$-conull set and $t(fg,g^{-1}) = (f,e) \in U$. So $s(f,g) = (f,e) \in U$, and $(f,g)(\mu_f \times \nu) = (\mu_{fg} \times \nu)$. Hence, $(\mu_f \times \nu)$ is (left) quasi-invariant.

A symmetric measure class $\mathcal{C}$ is called invariant if a quasi-invariant probability belongs to $\mathcal{C}$ according to Hahn \cite{Hahn78}. Hence, the measure $(\mu_f \times \nu)$ belonging to the family $([\mu_f] \times \nu)_{f \in \mathfrak{m}_x}$ is symmetric and quasi-invariant. The translation-invariant family of measures is the Haar system  of measure for the groupoid $\mathcal{G}\rightrightarrows \mathcal{G}^o$. Subsequently, the pair $(\mathcal{G}, ([\mu_f] \times \nu)_{f \in \mathfrak{m}_x})$ constitute an \emph{etale groupoid} or a topological groupoid by the action of the locally compact group $G(1)$; but a \emph{measure groupoid} by its ergodic action.
\end{proof}
The Haar system of measures $([\mu_f] \times \nu)_{f \in \mathfrak{m}_x}$ is not unique since each $f \in \mathfrak{m}_x$ defines a Haar system and a measure groupoid. Also, since we have as many maximal ideal as there are points in $X$, the pairs $(\mathcal{G}_x,([\mu_f] \times \nu)_{f \in \mathfrak{m}_x})$ aggregate to a bundle of measure groupoids. Let $\mathcal{C} = ([\mu_f] \times \nu)$ for some$f \in \mathfrak{m}_x$ be a class of measures on the $\mathcal{G}$, then from the definition of a bundle groupoid, we have a bundle groupoid $\mathcal{G} = \underset{x \in X}\bigcup (\mathcal{G}_x, C)$, which is also a measure groupoid since it is a union of measure groupoids. The composable is given as $\mathcal{G}^{(2)} = \underset{i \in I}\bigcup \mathcal{G}_x^{(2)}$, where the product of $((f,g_1)(fg_1,g_2) \in \mathcal{G}_x^{(2)}$ is as in $\mathcal{G}_x$ for each $x \in X$. This groupoid structure now helps us to redefine the action of $C(X)$ on the generalized space $\mathcal{M}(X)$.

\section{Action of Measure Groupoid}
The characterization of measure groupoid (virtual group) using the idea of action groupoid, according to Ramsay \cite{Ramsay71}, helps us to study groupoids from the viewpoint of group representation. From his definition of action of a groupoid on a set, we define the action of the measure groupoid $\mathcal{G} = \mathfrak{m}_x \rtimes G(1)$ on the generalized space $\mathcal{M}(X)$. Instead of stabilizers of points in $X$, we are interested in stabilizers of measure classes or generalized subsets $\{\mathcal{C} \in \mathscr{C}$ defined by $f \in \mathfrak{m}_x\}$. This role is played by the Borel group $G(1)$ through its Borel action on $\mathfrak{m}_x$ which preserve the zero sets $Z[\mathfrak{m}_x]$. Therefore, $G(1)$ is the stabilizer of each measure class $\mathcal{C}$.

Further, following from the definition of the homotheties $T_{x,r} \in Aut(X,\mu)$ using $G(1)$ and $\mathfrak{m}_x$, we will formulate the $Aut(X,\mu)$-action on $C(X)$ in terms of the action of the arrows $\mathcal{G}\rightrightarrows$ of the measure groupoid on the open subspaces $\{\mathcal{G}((f,e),-) : (f,e) \in \mathfrak{m}_x\}$, and on its space of units $\mathfrak{m}_x \times \{e\}$; and the $Aut(X,\mu)$-action on $\mathcal{M}(X)$ in terms of the groupoid action on $\mathcal{M}(X)$. Before we give another main result of this paper we recall the definitions of a section and a bisection, as applicable to our context and to clarify the notion of a slice in the generalized space.
\begin{defx}
A section or a canonical form for the generalized space $\mathcal{M}(X)$ is a connected, closed, regularly embedded subspace $\Sigma$ of the generalized space $\mathcal{M}(X)$ that meets all orbits orthogonally. (See. \cite{BerndtConsoleOlmos})
\end{defx}
\begin{defx}\cite{Mackenzie2005}
A local section (cross section) $\sigma : M \to P$ is a continuous right inverse of the projection $\pi : P \to M$ of a fibre bundle. It identifies the base space $M$ with $\sigma(M) \subset P$.
\end{defx}
\begin{defx}\cite{Mackenzie2005}
A bisection of a groupoid $\mathcal{G} \rightrightarrows \mathcal{G}^o$ is a continuous (smooth) map $\sigma : \mathcal{G}^o \to \mathcal{G}$ which is inverse to the source map $s : \mathcal{G} \to \mathcal{G}^o$ and is such that $t \circ \sigma : \mathcal{G}^o \to \mathcal{G}^o$ is a homeomorphism.
\end{defx}
\begin{prop}
Let $\mathcal{G} \rightrightarrows \mathcal{G}^o$ be the action groupoid $\mathfrak{m}_x \rtimes G(1)$ resulting from the $G(1)$-action on $\mathfrak{m}_x$, then $\mathcal{G}\rightrightarrows$ has a proper action on the space of units $\mathfrak{m}_x \times \{e\}$ and on the open subspace $\mathcal{G}((f,e),-), (f,e) \in \mathfrak{m}_x \times \{e\}$.
\end{prop}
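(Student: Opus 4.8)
The plan is to verify the defining property of a proper $\mathcal{G}$-action separately for the two actions named in the statement, in each case reducing to the elementary fact that a locally compact Hausdorff group acting on itself by translations is free and proper. Recall that, in the groupoid framework of \cite{OkekeEgwe2018}, an action of $\mathcal{G}\rightrightarrows\mathcal{G}^o$ on a space $Y$ with moment map $p:Y\to\mathcal{G}^o$ is \emph{proper} when $(\gamma,y)\mapsto(\gamma\cdot y,y)$ from $\mathcal{G}\times_{\mathcal{G}^o}Y$ to $Y\times Y$ is a proper map; for the canonical action of a groupoid on its own unit space this reduces to properness of $(s,t):\mathcal{G}\to\mathcal{G}^o\times\mathcal{G}^o$. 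I would begin by recording the only facts needed, all furnished by Theorem~6.3: the groupoid $\mathcal{G}=\mathfrak{m}_x\rtimes G(1)$ is \'etale and topological (so $s$ and $t$ are local homeomorphisms), $G(1)$ is locally compact, and $\mathcal{G}$ carries the translation-invariant, quasi-invariant Haar system $(\mu_f\times\nu)_{f\in\mathfrak{m}_x}$ with $\nu$ a Haar measure on $G(1)$.

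I would dispatch the open subspace $\mathcal{G}((f,e),-)=s^{-1}(\{(f,e)\})=\{(f,g):g\in G(1)\}$ first. The map $g\mapsto(f,g)$ is a homeomorphism of $G(1)$ onto $\mathcal{G}((f,e),-)$, and the natural action of $\mathcal{G}$ on this $s$-fibre by composition, $(f,g)\cdot(fg,g')=(f,gg')$, is transported under this homeomorphism to the right translation action of $G(1)$ on itself. Since for compact $K,L\subseteq G(1)$ one has $\{g':Kg'\cap L\neq\emptyset\}\subseteq K^{-1}L$, which is compact, the translation action is proper; hence so is the $\mathcal{G}$-action on every $\mathcal{G}((f,e),-)$, and uniformly in $(f,e)$ because $\nu$ is independent of $f$. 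This step uses nothing beyond the local compactness of $G(1)$.

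The harder part is the unit space $\mathfrak{m}_x\times\{e\}$, where the canonical action $(f,g)\cdot(f,e)=(fg,e)$ is not literally a group translation, so properness cannot be read off directly. Here the plan is to transport the properness of the $s$-fibre action to the base using the \'etale structure: around any arrow $(f_0,g_0)$ choose a local bisection $\sigma:\mathcal{V}\to\mathcal{G}$ through it, on which $t\circ\sigma$ is a homeomorphism onto an open subset of $\mathcal{G}^o$; precomposition of the action map $\Psi:(f,g)\mapsto(fg,f)$ with $\sigma$ identifies $\Psi$ locally with the translation action already shown to be proper. One then covers a preimage $\Psi^{-1}(K_1\times K_2)$, for $K_1,K_2\subseteq\mathfrak{m}_x\times\{e\}$ compact, by finitely many such bisection charts, using that the $G(1)$-action preserves each zero set $Z(f)$ and only rescales on the cozero set (Section~2) so that the orbit structure --- and with it the quasi-invariance of the Haar system --- keeps the gluing of the local pieces under control, and concludes that $\Psi^{-1}(K_1\times K_2)$ is compact. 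I expect this descent from the $s$-fibres to the unit space --- reconciling the clean translation picture with the non-translational canonical action, and in particular handling the isotropy at the zero function of $\mathfrak{m}_x$ --- to be the main obstacle; it is exactly where the \'etale property of Theorem~6.3 and the zero-set preservation of the $G(1)$-action are indispensable.
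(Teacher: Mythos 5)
Your treatment of the $s$-fibre $\mathcal{G}((f,e),-)\cong G(1)$ is fine and is in fact cleaner than what the paper does there, but the second half of your plan has a genuine gap, and it sits exactly where you yourself flag it. Properness of the canonical action on the unit space is, as you note, properness of $(s,t):\mathcal{G}\to\mathcal{G}^o\times\mathcal{G}^o$, which for the action groupoid $\mathfrak{m}_x\rtimes G(1)$ is literally properness of the map $(g,f)\mapsto(fg,f)$, i.e.\ of the $G(1)$-action on $\mathfrak{m}_x$ itself. No amount of covering by local bisections can deliver this: a bisection chart identifies a neighbourhood of an arrow with a neighbourhood in the unit space and only sees the ``free directions'' of the action, whereas the obstruction to properness lives in the isotropy. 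Concretely, $(s,t)^{-1}\bigl(\{(0,e)\}\times\{(0,e)\}\bigr)\cong\mathrm{Stab}(0)=G(1)$, since the zero function of $\mathfrak{m}_x$ is fixed by every unit, and $G(1)$ contains the nonzero constants, hence is not compact under the hypotheses you allow yourself (mere local compactness). So the preimage of a compact set under $(s,t)$ need not be compact, and your descent argument cannot close without importing a compactness statement about $G(1)$ (or about all isotropy groups) that your write-up never supplies --- indeed your $s$-fibre step explicitly works in the generality where $G(1)$ is non-compact, which is incompatible with the conclusion you need on the unit space.

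For contrast, the paper takes a different route entirely: it first argues that the $G(1)$-action on $\mathfrak{m}_x$ is transitive (so that, via Mackenzie's Proposition 1.3.3, $t_{(f,e)}$ is a surjective submersion and the groupoid is locally trivial, covered by images of bisections), and then derives properness not from a translation argument but from an asserted compactness of the space of arrows $\mathfrak{m}_x\rtimes G(1)$, justified by identifying $G(1)\subset Aut(X,\mu)$ with a subgroup of the unitary group $\mathcal{U}(L^2(X,\mu))$ and by the ergodic preservation of measure classes. Whatever one thinks of those assertions, they are the load-bearing input: properness here is bought with a compactness claim about the acting object, not with the local structure of the \'etale groupoid. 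Your proposal is missing precisely that input, so as written the unit-space half does not go through.
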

\begin{proof}
Let $G(1) \times \mathfrak{m}_x \to \mathfrak{m}_x$ be the action of $G(1)$. The action is transitive, for given any two functions $f_1, f_2 \in \mathfrak{m}_x$ which do not vanish on the open set $U = U_{f_1}\cap U_{f_2} \subset X$; we have $f_1(y),f_2(y) \in \R$ for any $y \in U$, and there exists $a \in \R$ such that $af_1(y) = f_2(y) \; \implies \; g(y)f_1(y) = f_2(y) \; \implies \; gf_1 = f_2$, where $g(y) = a$ for some $g \in G(1)$.

The transitivity of the $G(1)$-action, according to \cite{Mackenzie2005}, Proposition 1.3.3, means that the map $t_{(f,e)} : (f,g) \mapsto (fg,e); \mathcal{G}((f,e),-) \to \mathfrak{m}_x \times \{e\}$ is a surjective submersion; and the division map $D_{(f,e)} :  \mathcal{G}((f,e),-) \times \mathcal{G}((f,e),-) \to \mathcal{G}, \; ((f,g_2),(f,g_1)) \mapsto (f,g_2)(f,g_1)^{-1} = (f{g_2g_1^{-1}}, g_2g_1^{-1})$ is a groupoid morphism over $t : \mathcal{G} \to \mathfrak{m}_x \times \{e\}$. Thus, there exists an open cover $\{\mathcal{V}_i\}$ of the space of units $\mathfrak{m}_x \times \{e\}$ by the images of bisections $\sigma ((f,e)) = \mathcal{G}((f,e),-)$ (which in this case are local sections over the source map) such that $t \circ \sigma((f,e)) = t|_{(f,e)} = t_{(f,e)} : \mathcal{G}((f,e),-) \to \mathfrak{m}_x \times \{e\}$ is a homeomorphism. Hence, the ergodic action of the measure groupoid $\mathcal{G} = \mathfrak{m}_x \times G(1)$ on its base space $\mathcal{G}^o = \mathfrak{m}_x \times \{e\}$ is proper because it is given by the map $t,s : \mathcal{G}\rightrightarrows \to \mathcal{G}^o \times \mathcal{G}^o, (f,g)  \mapsto (s(f,g),t(f,g))$ which is a proper map.

Likewise, the morphism $D_{(f,e)} :  \mathcal{G}((f,e),-) \times \mathcal{G}((f,e),-) \to \mathcal{G}$ defined over the target map $t$ is a proper map taking a compact space $\mathcal{G}(s(f,g),s(f,g))$ to a compact space $\mathcal{G}(s(f,g),t(f,g))$ for all $\mathcal{G}(s(f,g),-)$-composable arrows. The compactness of the closed embedded subspaces $\mathcal{G}((f,e),t(f,g))$ is evident in the fact that the ergodic actions of the measure groupoid $\mathcal{G} = \mathfrak{m}_x \times G(1)$ preserve measure classes; which entails metric invariance or transitivity. Thus, that the measure groupoid action is proper follows from the compactness of the space of arrows $\mathcal{G} = \mathfrak{m}_x \rtimes G(1)$, which correspond to the compactness of $Aut(X,\mu) \subset \mathcal{U}(L^2(X,\mu))$.
\end{proof}
The next result shows that the measure groupoid $\mathcal{G}$ has a true action on the generalized space $\mathcal{M}(X)$, which preserves the $\mathfrak{m}_x$-partition of the generalized space. First, we give the definitions of a true groupoid action and true homomorphism.
\begin{defx}\cite{Ramsay71}
A left action of a groupoid on a set $\mathcal{G} \times S \to S$, is said to be true if for a pair of arrows $g,h \in \mathcal{G}$, if $u \in Dom(h)$ and $h(u) \in Dom(g)$ implies that $gh$ is defined in $\mathcal{G}$. A true right action is similarly defined.
\end{defx}
\begin{defx}
A homomorphism $\phi : \mathcal{G}_1 \to \mathcal{G}_2$ (defining a representation) of groupoids is true if $gh \; \iff \; \phi(g)\phi(h)$. Thus, given a true homomorphism of groupoids $\phi : \mathcal{G}_1 \to \mathcal{G}_2$, then $\phi(\mathcal{G}_1)$ is a subgroupoid of $\mathcal{G}_2$.
\end{defx}
\begin{thm}
Given the measure groupoid $(\mathcal{G}, \mathcal{C})$ or $\mathcal{G}\rightrightarrows \mathcal{G}^o$ trivialized on $X \simeq \mathcal{U}$, the map $\phi : F = \mathcal{G} \times \mathcal{M}^1(X) \to \mathcal{M}^1(X)$ is a true action of the measure groupoid $(\mathcal{G},C)$ on $\mathcal{M}^1(X) = \{D((f,g)) : (f,g) \in \mathcal{G}\}$. Thus, we have $\phi : F = \mathcal{G} \times \mathcal{M}^1(X) \to \mathcal{M}^1(X)$.
\end{thm}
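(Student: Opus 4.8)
The plan is to realise $\phi$ as the groupoid action induced on probability measures by the $C(X)$--action on measure classes, and then to read the truth condition straight off the compatibility of the anchor map with the source and target of $\mathcal{G}$. First I would fix the anchor (moment) map $\pi : \mathcal{M}^1(X)\to\mathcal{G}^o = \mathfrak{m}_x\times\{e\}$, where $\mathcal{M}^1(X)$ is the space of probability Radon measures on $X$, exhausted by the normalised class representatives $\mu_{fg}$ attached to arrows $(f,g)\in\mathcal{G}$ in accordance with the description $\mathcal{M}^1(X)=\{D((f,g))\}$ (item (i) above). By Proposition 3.1 and the trivialisation $\mathcal{U}\simeq X\simeq\mathcal{G}^o$, each $\mu\in\mathcal{M}^1(X)$ lies in a unique measure class $[\mu_f]$ attached to a generalised point $\delta_x$, and setting $\pi(\mu)=(f,e)$ gives a Borel surjection --- essentially the map $p:U_f\mapsto f$ of item (ii) above, read on measures. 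For an arrow $(f,g)\in\mathcal{G}$ with $s(f,g)=f=\pi(\mu)$ I would then define
\[
\phi\big((f,g),\mu\big) \;=\; (f,g)\cdot\mu \;:=\; T_g(\mu) \;=\; \mu_{fg},
\]
where $T_g:[\mu_f]\to[\mu_f]$, $\mu_f\mapsto\mu_{fg}$, is the $G(1)$--action on the class; equivalently $(f,g)\cdot\mu = g\circ{T_{x,f}}_{*}\mu = T_{x,fg}(\mu)$ in the notation of Theorem 4.3. Since $\pi\big((f,g)\cdot\mu\big)=(fg,e)=t(f,g)$, the map $\phi$ is defined precisely on the fibred product $\{((f,g),\mu):\pi(\mu)=s(f,g)\}\subset\mathcal{G}\times\mathcal{M}^1(X)$.

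Next I would verify the action axioms. The unit $(f,e)$ acts trivially since $T_e=\mathrm{id}$ and $\rho(f,e)=1$. For associativity, take $(f_1,g_1),(f_2,g_2)\in\mathcal{G}^{(2)}$ with $f_2=f_1g_1$, so that $(f_1,g_1)(f_2,g_2)=(f_1,g_1g_2)$; for $\mu$ with $\pi(\mu)=(f_1,e)$ the identity
\[
\phi\big((f_2,g_2),\phi((f_1,g_1),\mu)\big) \;=\; T_{g_2}(\mu_{f_1g_1}) \;=\; \mu_{f_1g_1g_2} \;=\; T_{g_1g_2}(\mu) \;=\; \phi\big((f_1,g_1)(f_2,g_2),\mu\big)
\]
holds, the middle equalities being $T_{g_1g_2}=T_{g_2}\circ T_{g_1}$ on the class together with the cocycle relation $\rho(f_1,g_1g_2)=\rho(f_1,g_1)\,\rho(f_1g_1,g_2)$, which is exactly what makes the Radon--Nikodym densities of the transported measures compose correctly; this is where the quasi--invariance and the positive Borel R--N derivative supplied by Theorem 5.6 enter. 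That $\phi\big((f,g),\mu\big)$ again lies in $\mathcal{M}^1(X)$ follows because $\rho$ is $\mathbb{T}$--valued (it takes values in $(0,1]$), so the transported measure is finite and can be renormalised on its conull set, and because the $G(1)$--action preserves the null set $Z(f)$, so that $\phi$ respects the $\mathfrak{m}_x$--partition of $\mathcal{M}^1(X)$; combined with the description $\mathcal{M}^1(X)=\{D((f,g)):(f,g)\in\mathcal{G}\}$ this gives surjectivity $\phi(\mathcal{G}\times\mathcal{M}^1(X))=\mathcal{M}^1(X)$.

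Finally, truth is forced by this bookkeeping. If $\mu\in\mathrm{Dom}\big((f_1,g_1)\big)$, i.e. $\pi(\mu)=s(f_1,g_1)$, and $(f_1,g_1)\cdot\mu\in\mathrm{Dom}\big((f_2,g_2)\big)$, then $t(f_1,g_1)=\pi\big((f_1,g_1)\cdot\mu\big)=s(f_2,g_2)$, which is precisely the condition for $(f_1,g_1)(f_2,g_2)$ to be a composable pair in $\mathcal{G}$; hence that product is defined in $\mathcal{G}$ and the action is true. Consequently the induced homomorphism from $\mathcal{G}$ into the automorphisms of the partition $\{[\mu_f]\}$ is a true homomorphism, so its image is a subgroupoid of the automorphism groupoid of $\mathcal{M}^1(X)$, and Proposition 6.4 then transfers properness of the $\mathcal{G}$--action on $\mathcal{G}^o$ and on the slices $\mathcal{G}((f,e),-)$ to the action on $\mathcal{M}^1(X)$ through the identification $\mathcal{M}^1(X)=\{D((f,g))\}$. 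The step I expect to be the main obstacle is the well-definedness in the second paragraph: one must check that the transported probability representative is independent of the chosen representative of $[\mu_f]$ and still belongs to the prescribed class, and this genuinely needs the measure-groupoid structure --- quasi-invariance plus the positive Borel R--N cocycle of Theorem 5.6 --- rather than merely the etale/topological groupoid structure; the remaining verifications are cocycle bookkeeping.
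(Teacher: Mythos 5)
Your proposal is correct and follows essentially the same route as the paper: the action is defined by transporting a measure within its class via the $G(1)$-action ($\mu\mapsto\mu_{fg}$), the homomorphism property is the computation $\mu_{f_1g_1g_2}=\mu_{f_2g_2}$, and trueness is read off from the domain conditions forcing $t(f_1,g_1)=s(f_2,g_2)$, i.e.\ composability. You are in fact more explicit than the paper about the anchor map $\pi$, well-definedness on class representatives, and the role of the cocycle/R--N derivative, which the paper leaves implicit.
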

\begin{proof}
From the definition of the groupoid above, the set $\mathfrak{m}_x \times \{e\} = \{(f,e) : f \in \mathfrak{m}_x\}$ is the space of units in $\mathcal{G}$. By the $\mathfrak{m}_x$-decomposition, there is a homomorphism \[\phi : \mathcal{G}^o \to Inj(\mathcal{M}(X))|_{\mathfrak{m}_x}, \phi((f,e))(\mu_f) = (f,e)\mu_f \to \mu_f\] for any unit $(f,e) \in \mathcal{G}^o$.

Let $\mathcal{M}^1(X) \simeq \mathbb{T}$ be the normalized space of Radon measures on $X$. Then $\phi$ is a homomorphism between the object space $\mathcal{G}^o = \mathfrak{m}_x \times \{e\}$ and the set of injections $Inj(\mathcal{M}^1(X))$, preserving null sets of measures; and subsequently, the $\mathfrak{m}_x$-partition of $\mathcal{M}^1(X))$.

The map $\phi : \mathcal{G} \rightrightarrows \mathcal{G}^o \to Inj(\mathcal{M}(X))$ is a homomorphism for it preserves the continuous partial product $(f_1,g_1)(f_2,g_2) = (f_1,g_1g_2)$, where $f_2 = f_1g_1$, on $\mathcal{G}$. This is shown as follows; \[\phi((f_1,g_1)(f_2,g_2))(\mu) = \phi((f_1,g_1))\phi((f_2,g_2))(\mu) \] \[= \phi((f_2,g_2))(\mu_{f_1g_1}) = \mu_{f_1g_1g_2} = \mu_{f_2g_2}.\] Further, as given above, $\phi((f,e))\mu_f = (f,e)\mu_f \to \mu_f$, for any $(f,e) \in \mathcal{G}^o$. Thus, the set $\{(f,e) : f \in \mathfrak{m}_x\}$ maps to the set of units in $Inj(\mathcal{M}(X))$. Hence, $\phi$ defines the $\mathcal{G}$-action preserving the partition determined by the $\mathfrak{m}_x$-decomposition. \\
Finally, the action of the measure groupoid $(\mathcal{G},C)$ on the $\mathcal{M}^1(X)$  is true since the fact that $\mu \in D((f_1,g_1))$ and $(f_1,g_1)\mu \in D(f_2,g_2)$ implies the definition of the product $(f_1,g_1)(f_2,g_2)$.

Thus, $\phi : \mathcal{G} \times \mathcal{M}^1(X) \to \mathcal{M}^1(X)$ is a true action of the measure groupoid $(\mathcal{G},C)$ on $\mathcal{M}^1(X) = \{D((f,g)) : (f,g) \in \mathcal{G}\}$ and $\phi((f_1,g_1)), \phi((f_2,g_2)) \in Inj(\mathcal{M}(X))$.
\end{proof}
The true action of a groupoid corresponds to the Cayley's Theorem in the context of groupoids, according to Ramsay \cite{Ramsay71}. The Cayley's Theorem for groupoid relates the transitivity of $\mathcal{G}$ on the $t$-fibres to the \emph{trueness} of the $\mathcal{G}$-action. Thus, the $t$-fibre over any base or generalized point $\delta_x \in \mathcal{M}(X)$, that is, the measure classes $\mathscr{C} \simeq \{t^{-1}(\phi^{-1}(\mu)) = \mathcal{G}^{(f,e)}: (f,e) \in \mathcal{G}^o\}$, are partitions. So that $\phi : \mathcal{G} \to Inj(\mathcal{M}(X), \mathscr{C})$ is a true homomorphism. This makes $\mathcal{G}$ a groupoid of bundle bijections. This is made more obvious in the following slice analysis.

\section{Slice Analysis}
The equivariance  of the action of $Aut(X,\mu)$ on $C(X)$ and $\mathcal{M}(X)$ is shown in the proof of trueness of the measure groupoid $\mathcal{G}$-action on the generalized space $\mathcal{M}(X)$. Thus, given a Radon measure $\mu \in \mathcal{M}(X)$, with $U_\mu$ as the neighbourhood, the $Aut(X,\mu)$-action is easily seen to be proper at $\mu$ by the fact that the set $\{\varphi \in Aut(X,\mu) : \varphi_*(U_\mu) \cap U_\mu \not = \emptyset \}$ is relatively compact in $Aut(X,\mu)$. This is because of the ergodicity of $Aut(X,\mu)$-action, which implies that $\mu \in \mathcal{M}(X)$ (or its measure class $[\mu]$) is $Aut(X,\mu)$-invariant.
Using these, we now extend the result of \cite{OkekeEgwe2018}, Theorem 2.7 to the case of the action of the measure groupoid $(\mathcal{G},\mathcal{C})$ on the generalized space $\mathcal{M}(X)$.

\begin{thm}
Given the measure groupoid $(\mathcal{G},\mathcal{C})$ action on $\mathcal{M}(X)$. The orbit $O_{\mu}$ of the action of the arrows $\mathcal{G}\rightrightarrows$ passing through a point $\mu \in \mathcal{M}(X)$ has a slice at each point; hence, is a cohomogeneity-one $\mathcal{G}$-space.
\end{thm}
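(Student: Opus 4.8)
The plan is to deduce the slice from the properness of the groupoid action and then to extract the cohomogeneity from the dimension of the resulting section.

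First I would assemble the properness input. As recorded at the start of this section, $Aut(X,\mu)$ acts properly at each $\mu\in\mathcal{M}(X)$ — the set $\{\varphi\in Aut(X,\mu):\varphi_*(U_\mu)\cap U_\mu\neq\emptyset\}$ is relatively compact, since the ergodicity of the action forces the class $[\mu]$ to be $Aut(X,\mu)$-invariant — and equivalently, by Proposition 6.4, the map $(f,g)\mapsto(s(f,g),t(f,g))$ is proper; by Theorem 6.7 the $\mathcal{G}$-action on $\mathcal{M}^1(X)$ is true, with constant isotropy $\mathcal{G}_\mu=G(1)$, the common stabilizer of the measure classes. These are precisely the hypotheses of \cite{OkekeEgwe2018}, Theorem 2.7, the slice theorem for proper actions of (\'etale/measure) groupoids, which I would invoke to obtain, at each $\mu\in\mathcal{M}(X)$, a slice $S_\mu$: a $G(1)$-invariant, closed, regularly embedded subspace of $\mathcal{M}(X)$ through $\mu$, transverse to the orbit $O_\mu$, whose saturation $\mathcal{G}\cdot S_\mu$ is an open invariant neighbourhood of $O_\mu$ and for which $\mathcal{G}\times_{G(1)}S_\mu\to\mathcal{G}\cdot S_\mu$ is an equivariant homeomorphism. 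This already settles the first assertion — $O_\mu$ has a slice at each of its points.

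Next I would exhibit a global section and compute the cohomogeneity. The $\mathcal{G}$-orbit through $\mu$ is the measure class $[\mu]$, and by Theorem 4.3 together with the commuting diagram in the proof of Theorem 5.1 these orbits, carried over the base of generalized points $\mathcal{U}=\{\delta_x:x\in X\}$, are parametrized by $\mathcal{U}$: each class meets $\mathcal{U}$ in a single ergodic (Dirac) measure, and — by the transitivity of the $G(1)$-action on $\mathfrak{m}_x$ established in Proposition 6.4 — exactly one class sits over each point. I would then take $\Sigma=\mathcal{U}$, or a connected open piece of it, as the section: it is connected because $X$ is, closed and regularly embedded in $\mathcal{M}(X)$ as the base of the affine continuous extension of Proposition 3.1, and it meets every orbit, in a single point. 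Orthogonality is the content of the tangent-bundle structure combined with the metric on $X\simeq\mathbb{T}$ induced by the unit $\tau\in G(1)$ (the Remark of \S2): for this metric $\mathcal{U}$ is the horizontal base while the orbits run along the vertical fibres, so $\mathcal{U}$ and the fibres are orthogonal complements and $\Sigma$ meets all orbits orthogonally in the sense of Definition 6.1. To reconcile $\Sigma$ with the abstract slice, one uses the ergodicity of the $G(1)$-action on $\mathfrak{m}_x$, which collapses each fibre onto a single $\mathcal{G}$-orbit; hence every direction transverse to $O_\mu$ points along $\mathcal{U}$, so $S_\mu$ may be chosen inside $\Sigma$, and $\mathcal{G}\cdot\Sigma=\mathcal{M}(X)\setminus\{0\}$.

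Finally, since $\Sigma\simeq\mathcal{U}\simeq X\simeq\mathbb{T}$ is one-dimensional and the isotropy $G(1)$ is the same at every point, there is a single orbit type, the orbit space $\mathcal{M}(X)/\mathcal{G}\simeq\Sigma$ is one-dimensional, and the principal orbits have codimension one in $\mathcal{M}(X)$; thus $\mathcal{M}(X)$ is a cohomogeneity-one $\mathcal{G}$-space. The step I expect to be the genuine obstacle is not the existence of the slice — immediate once properness is in hand — but its identification with the one-dimensional section $\Sigma=\mathcal{U}$: ruling out further transverse directions inside a single tangent space forces one to combine the ergodicity of the $G(1)$-action on $\mathfrak{m}_x$ (so that the measure-class fibration collapses onto one orbit) with the metric furnished by $\tau$ (so that ``transverse to the orbit'' genuinely means ``along $\mathcal{U}$'').
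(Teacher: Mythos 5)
Your first half is sound and matches the paper's opening move: both you and the paper reduce the existence of a slice to Theorem~2.7 of \cite{OkekeEgwe2018}, fed by the properness established in Proposition~6.4 and the trueness of the action from Theorem~6.7 (the paper additionally routes this through the bisection machinery of \cite{Mackenzie2005} --- images of bisections being closed embedded subspaces --- and the geometric independence of the sets $K_i=\{\delta_y : y\in F_i\}$, but the logical content is the same). The problem is your second half, the identification of the section with $\mathcal{U}=\{\delta_x : x\in X\}$.

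The claim that ``each class meets $\mathcal{U}$ in a single Dirac measure, and exactly one class sits over each point'' is not available and in fact contradicts the fibration the paper has built. The orbits of the $\mathcal{G}$-action are the measure classes, and over each generalized point $\delta_x$ the paper places the \emph{entire family} $\{[\mu_f] : f\in\mathfrak{m}_x\}$ arising from the $\mathfrak{m}_x$-decomposition --- this is exactly the fibre $Tan(\delta_x,x)\simeq\mathcal{M}(X)\setminus\{0\}$ of Theorem~4.3, and it is the inner integral $\int\bigl(\int h\,d\nu_f\bigr)d\bar{\mu}(f)$ over these classes that makes Theorem~5.1 nontrivial. Only one of those classes, namely $[\delta_x]$, contains a Dirac measure; the rest are disjoint from $\mathcal{U}$. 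So $\mathcal{U}$ does not meet every orbit and cannot serve as the section $\Sigma$ of Definition~6.1. (Proposition~6.4's transitivity statement concerns the $G(1)$-action on functions over a common cozero set, not a collapse of all of $\mathfrak{m}_x$ --- or of all measure classes --- into one orbit; if it did collapse them, the groupoid's Haar system $(\mu_f\times\nu)_{f\in\mathfrak{m}_x}$ would be a single measure.) The paper's section is instead a transversal $\Sigma=\{\nu_i\}\subset\mathcal{M}^1(X)$ consisting of one representative per measure class, with $\mathcal{U}$ playing the opposite role: the principal orbits pass \emph{through} $\mathcal{U}$, and cohomogeneity one is read off from the claim that these principal orbits have dimension one less than $\mathcal{M}(X)$ while the orbits through non-ergodic measures are smaller. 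Your final dimension count ($\dim\Sigma=\dim\mathbb{T}=1$, single orbit type) therefore rests on a section that is not one, and the asserted single orbit type is also at odds with the paper's principal-versus-smaller-orbit dichotomy.
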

\begin{proof}
The result follows from \cite{OkekeEgwe2018} once we understand the affine and geometric nature of the measure groupoid action on the tangent bundle $T(\mathcal{M}(X))$ over the compact space $\mathcal{U} = \{\delta_x : x \in X\}$ of Dirac measures on $X$. We outline the geometric features of the $\mathcal{G}$-space as follows. By definition $Aut(X,\mu) \subseteq Inj(\mathcal{M}(X))$. Thus, the homomorphism $\phi : \mathcal{G} \to Inj(\mathcal{M}(X): \mathscr{C})$ makes $Aut(X,\mu)$-action equivalent to $\mathcal{G}$-action on $\mathcal{M}(X)$. This means that the measure classes are preserved; and hence the geometric independence of the measures in the space $\mathcal{M}(X)$.

By definition the generalized points $\mathcal{U} = \{\delta_x : x \in X\} \subset \mathcal{M}(X)$ are geometrically independent. So, we have $Aut(X,\mu) \times \mathcal{U} \to \mathcal{U}$ as a restriction of the $Aut(X,\mu)$-action on the generalized space $\mathcal{M}(X)$. Thus, for any $\varphi \in Aut(X,\mu)$, the restriction of $\varphi_*$ to the subset $\mathcal{U}$ gives a translation or transformation $\varphi_* : \delta_x \mapsto \delta_{\varphi(x)}$ of the invariant ergodic measure. $\varphi_*$ continuously and affinely extends $\mathcal{U}$ to the generalized space $\mathcal{M}(X)$, giving rise to a bundle of tangent spaces of measures $Tan(\delta_x,x) \simeq \mathcal{M}(X)\setminus \{0\}$.

Since by the local triviality of the measure groupoid $\mathcal{G} = \mathfrak{m}_x \rtimes G(1)$ there is a vertex bundle at each $f \in \mathfrak{m}_x = \ker \delta_x$ such that $\mathcal{G}((f,e),-)(\mathfrak{m}_x, \mathcal{G}((f,e),(f,e)),t_{(f,e)})$ are principal bundles. Given any other $(h,e) \in \mathfrak{m}_x \times \{e\}$, we have $\varphi_* \in \mathcal{G}((f,e),(h,e)) : \varphi \in Aut(X,\mu)$ defining a bundle translation \[ R_{\varphi_*} : \mathcal{G}((f,e),-)[\mathfrak{m}_x \times \{e\}, \mathcal{G}((f,e),(f,e))] \to \mathcal{G}((h,e),-) [\mathfrak{m}_x \times \{e\}, \mathcal{G}((h,e),(h,e))], \] where $h = \varphi_*(f) = f \circ \varphi^{-1}$ is equivalent to $\nu = \varphi_*(\mu) = \mu \circ \varphi$. This is an isomorphism of principal bundles over $\mathfrak{m}_x \times \{e\}$ and \[I_{\varphi_*} : \mathcal{G}((f,e),(f,e)) \to \mathcal{G}((h,e),(h,e)), \] is an isomorphism of affine (Lie) groups induced by $\varphi_* \in \mathcal{G}((f,e),(h,e))$.

By the identification of the compact regular space $X$ with $\mathcal{U}$, the pair of homeomorphisms $(\varphi,\varphi_*)$ defines a bundle translation, which the homomorphism $\phi : \mathcal{G} \to Inj(\mathcal{M}(X))$ makes a groupoid translation by trivializing the source map $s$ as follows $t \circ \varphi_* = \varphi \circ t$ and $s \circ \varphi_* = s$, where $\phi((f,g)) = \varphi_*$, for some $(f,g) \in \mathcal{G}$.
\begin{center}
\begin{tikzpicture}
\draw[->] (0.3,0) -- (4.5,0) node[right] {$\mathcal{U}$};
\draw[-] (2.5,0) node[above] {$\varphi$};
\draw[<-] (5,0.3) -- (5,1.5) node[above] {$T_{\delta_{\varphi(x)}}\mathcal{G}(\delta_{\varphi(x)})$};
\draw[-] (5,0.9) node[right] {$t$};
\draw[-] (0,1.5) node[above] {$T_{\delta_x}\mathcal{G}(\delta_x)$};
\draw[->] (0,1.5) -- (0,0.3) node[below] {$\mathcal{U}$};
\draw[-] (0,0.9) node[left] {$t$};
\draw[->] (1,1.7) -- (3.4,1.7);
\draw[-] (1.7,1.7) node[above] {$\varphi_*$};
\end{tikzpicture}
\end{center}
This defines a homeomorphism of $t$-fibres (tangent measures): $\varphi_* : t^{-1}(\delta_x) \to t^{-1}(\delta_{\varphi(x)})$ preserving the measure classes ($\mathscr{C}$-partition). For any other non-ergodic measure $\lambda \in t^{-1}(\delta_x))$, we have $\varphi_*(\lambda) = \lambda \circ \varphi \in t^{-1}(\delta_x)$. From which we see that the pair of morphisms $(\varphi,\varphi_*)$ corresponds to a bisection $\sigma : \mathcal{G}^o \to \mathcal{G}, \; (f,e) \mapsto \mathcal{G}((f,e), -)$ of the measure groupoid, and $t \circ \sigma : \mathcal{G}^o \to \mathcal{G}^o, \; (f,e) \mapsto (f\circ \varphi^{-1}, e)$ is a homeomorphism. Since the homomorphism $\phi : \mathcal{G} \to Inj(\mathcal{M}(X))$ is true, the $\mathcal{G}$-action on the $t$-fibres $t^{-1}((f,e)) = \mathcal{G}(-,(f,e))$ is transitive. Hence, $\mathcal{G} \simeq Inj(\mathcal{G}, \{t^{-1}((f,e))\}_{(f,e)\in \mathcal{G}^o}) \simeq Inj(\mathcal{G},\mathcal{C})$.

Now, according to \cite{Mackenzie2005}, the image of a bisection $\sigma((f,e)) = \mathcal{G}((f,e),-)$ is a closed embedded subspace of $\mathcal{G}$, and any such embedded subspace is the image of a unique bisection on which $s,t$ are homeomorphisms. Therefore, the measure classes, defined by null sets, assures the existence of slices at every point $\mu \in \mathcal{M}(X)$. Hence, the existence of slice at each point of $\mathcal{M}(X)$ follows from the fact that each of the subsets $K_i = \{\delta_y : y \in F_i\}$ is geometrically independent in $\mathcal{M}(X)$. Thus, the geometrical independence of these sets constituting the $z$-filter or ultrafilter defines a slice at $\mu \in Tan(\delta_x,x) \simeq \mathcal{M}(X)\setminus\{0\}$. This implies existence of a section $\{\nu_i\} = \Sigma \subset \mathcal{M}^1(X)$ such that action $\mathcal{G}((f,e),(f,e)) \times \Sigma \to \Sigma$ simplifies the original $\mathcal{G}$-action, such that, $\mathcal{G}((f,e),(f,e)) \times \Sigma  \simeq \mathcal{G} \times \mathcal{M}^1(X)$.

The generalized subsets $\mathscr{C}$ (or the measure classes) could be considered as constituting the principal orbit through the set of Dirac measures $\mathcal{U} = \{\delta_x : x \in X\}$. For any other non-ergodic Radon measures taken from any of the classes $\{\mu_f\}_{f \in \mathfrak{m}_x}$ at $x \in X$, which are related to the Dirac measure $\delta_x$, the orbit (tangent space) is smaller. Thus, the dimension of the principal orbit of the $\mathcal{G}$-action is one less than the dimension of $\mathcal{M}(X)$; Hence, $\mathcal{M}(X)$ is a cohomogeneity-one $\mathcal{G}$-space. Thus, the section $\Sigma$ orthogonally meets all the orbits of the $\mathcal{G}$-action which are the measure classes.
\end{proof}

\section{conclusion}
Central to this analysis is the polar action of the automorphism group $G(1) \subset Aut(X,\mu)$ on the generalized space $\mathcal{M}(X)$, which was defined using the commutative algebra $C(X)$. Ergodic requirements made it into the dynamical system of action groupoids $G(1)\times \mathfrak{m}_x \rightrightarrows \mathfrak{m}_x$. This proper isometric action is realized as a true action of the measure groupoid $\mathcal{G}$ on the tangent bundle of measures over the compact subset $\mathcal{U} = \{\delta_x : x \in X\}$; and the definition of bisections shows that the $t$-fibres $t^{-1}(\varphi_*(\delta_x)) = \mathcal{G}(-,\delta_{\varphi(x)})$ have the transitive $\mathcal{G}$-action. Hence, $\phi : \mathcal{G} \to Inj(\mathcal{M}(X))$ is a groupoid representation.

The orbit of the measure groupoid action is an immersed submanifold or section $\Sigma$, which meets every measure class orthogonally. While in the case of a Lie groupoid, the section $\Sigma$ is defined by a horizontal distribution $\mathcal{H}$ whose integral manifold gives the foliation of the groupoid. Here it is defined by sequence/net of $\varphi$-invariant measures related to the null hyperplane $\mathfrak{m}_x$ making up the closed set $\mathcal{M}(X)_\varphi$, which net converging to $\delta_x$ defining a dynamical representation of each point $x \in X$ on the tangent spaces of measures $Tan(\delta_x, x)$ constituting the measure bundle $T(\mathcal{M}(X))$. 

Thus, this instantiates the classical Chevalley restriction Theorem for the adjoint action of a compact group $G$ as here applied to groupoid. The ergodic groupoid $G(1) \subset Aut(X,\mu)$ is compact, a $G(1)$-invariant function on $\mathcal{M}(X)$ is integrable if and only if its restriction to the section $\Sigma$ is integrable, and invariant under the stabilizer of the section $\Sigma$. \cite{GroveZiller2012}. The orbits of the ergodic $G(1)$-action are the measure classes; and the ergodic elements constitute the boundary of the closed subspace $\mathcal{M}(X)_\varphi$.

\bibliographystyle{amsplain}

\providecommand{\bysame}{\leavevmode\hbox to3em{\hrulefill}\thinspace}

\end{document}